\documentclass[letterpaper,12pt,reqno]{amsart}

\usepackage[margin=1in]{geometry}

\usepackage{graphicx,amsmath,amssymb,amsthm,paralist,color,tikz-cd}
\usepackage{mathrsfs}
\usepackage{mathtools}
\usepackage{cite}
\usepackage{enumitem}  
\usepackage{setspace}

\usepackage{tikz}
%\usetikzlibrary{decorations.markings}

\usepackage[color=blue!30]{todonotes}
\setlength{\marginparwidth}{2cm}
\reversemarginpar
\usepackage{hyperref}
\hypersetup{
    colorlinks   = true,
     citecolor    = black,
    linkcolor    = blue
}

\numberwithin{equation}{section}

\newtheorem{theorem}{Theorem}[section]
\newtheorem{lemma}[theorem]{Lemma}

\newtheorem{question}[theorem]{Question}

\newtheorem{corollary}[theorem]{Corollary}

\newtheorem{thmintro}{Theorem}

\theoremstyle{definition}
\newtheorem{definition}[theorem]{Definition}

\newtheorem{remark}[theorem]{Remark}

\newcommand{\eps}{\varepsilon}

\DeclareMathOperator{\sys}{sys}
\DeclareMathOperator{\Mod}{Mod}
\DeclareMathOperator{\dist}{dist}
\DeclareMathOperator{\im}{i}
\DeclareMathOperator{\metr}{metr}
\DeclareMathOperator{\topol}{top}
\DeclareMathOperator{\harm}{harm}
\DeclareMathOperator{\vol}{vol}

\title[Geometry in a fixed conformal class]{Geometry and entropies in a fixed conformal class on surfaces}

\author[T.Barthelm\'e]{Thomas Barthelm\'e}
\address{Department of Mathematics and Statistics, Queen's University, Kingston, ON}
\email{thomas.barthelme@queensu.ca}

\author[A.Erchenko]{Alena Erchenko}
\address{Mathematics Department, Stony Brook University, Stony Brook, NY}
\email{alena.erchenko@stonybrook.edu}

\date{}

\begin{document}

\begin{abstract}
We show the flexibility of the metric entropy and obtain additional restrictions on the topological entropy of geodesic flow on closed surfaces of negative Euler characteristic with smooth non-positively curved Riemannian metrics with fixed total area in a fixed conformal class. Moreover, we obtain a collar lemma, a thick-thin decomposition, and precompactness for the considered class of metrics. Also, we extend some of the results to metrics of fixed total area in a fixed conformal class with no focal points and with some integral bounds on the positive part of the Gaussian curvature.
\end{abstract}

\maketitle

\section{Introduction}

When $M$ is a fixed surface, there has been a long history of studying how the geometric or dynamical data (e.g., the Laplace spectrum, systole, entropies or Lyapunov exponents of the geodesic flow) varies when one varies the metric on $M$, possibly inside a particular class. 

In \cite{BarthelmeErchenko}, we studied these questions in a class of metrics that seemed to have been overlooked: the family of non-positively curved metrics within a fixed conformal class. In this article, we prove several conjectures made in \cite{BarthelmeErchenko}, as well as give a fairly complete, albeit coarse, picture of the geometry of  non-positively curved metrics within a fixed conformal class.

Since Gromov's famous systolic inequality \cite{Gromov}, there has been a lot of interest in upper bounds on the systole (see for instance \cite{Guth}). In general, there is no positive lower bound on the systole. However, we prove here that non-positively curved metrics in a fixed conformal class do admit such a lower bound. 
\begin{thmintro}[Theorem~\ref{theorem: bound systole} and Corollary~\ref{corollary: bound topological entropy}]\label{thmintro:systole_and_entropy_bounds}
Let $\sigma$ be a fixed hyperbolic metric on a closed surface $M$ of negative Euler characteristic. Let $A>0$ be fixed. There exist positive constants $C_1,C_2$ depending on the topology of $M$, the metric $\sigma$ and $A$ such that
\begin{equation*}
\inf\limits_{g\in[\sigma]^\leq_A}\sys(g) \geq C_1 \text{ and } \sup\limits_{g\in[\sigma]^\leq_A}h_{\mathrm{top}}(g) \leq C_2,
\end{equation*}
where $[\sigma]^{\leq}_A$ is the family of smooth non-positively curved Riemannian metrics on $M$ that are conformally equivalent to $\sigma$ and have total area $A$.
\end{thmintro}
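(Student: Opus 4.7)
Writing any $g \in [\sigma]^{\leq}_A$ in the conformal form $g = e^{2\varphi}\sigma$ with $\varphi \in C^\infty(M)$, the area constraint reads
\[
\int_M e^{2\varphi}\,dA_\sigma = A,
\]
while the conformal change of curvature $K_g = e^{-2\varphi}(-1 - \Delta_\sigma\varphi)$, together with $K_g \leq 0$, converts non-positive curvature of $g$ into the PDE inequality $\Delta_\sigma\varphi \geq -1$ on $M$. My plan is to combine a PDE-based upper bound on $\varphi$ with a non-positive-curvature comparison argument around the $g$-systole.

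The first step is a uniform upper bound $\varphi \leq C_0(\sigma,A)$. Because $\Delta_\sigma\varphi \geq -1$ makes $\varphi$ an ``almost subharmonic'' subsolution, a standard sub-mean value estimate on small $\sigma$-balls gives
\[
\varphi(p) \leq \frac{1}{A_\sigma(B_r^\sigma(p))}\int_{B_r^\sigma(p)}\varphi\,dA_\sigma + C_\sigma r^2
\]
for $r$ below the $\sigma$-injectivity radius. Jensen's inequality applied to $e^{2\varphi}$ together with the area constraint bounds the right-hand integral average by $\tfrac12\log(A/A_\sigma(B_r^\sigma(p)))$, and fixing $r$ at a convenient $\sigma$-scale produces the desired uniform upper bound on $\varphi$.

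For the systole step, let $\gamma$ be a $g$-closed geodesic realizing $\sys(g)$ and $\gamma_\sigma$ the $\sigma$-geodesic in $[\gamma]$, whose $\sigma$-length is at least $\sys(\sigma) > 0$. The classical hyperbolic collar lemma applied to $\sigma$ yields an embedded $\sigma$-annular tube around $\gamma_\sigma$ of definite $\sigma$-modulus; this is also a conformally embedded annulus in $(M,g)$ whose $g$-area is controlled from above by the bound on $\varphi$. On the other hand, non-positive curvature of $g$ gives convexity of the $g$-distance function to the lift of $\gamma$ in the universal cover; in particular, curves $g$-parallel to $\gamma$ have $g$-length at least $\sys(g)$, so any embedded $g$-tube of $g$-radius $R$ around $\gamma$ has $g$-area at least $2R\,\sys(g)$. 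Matching the upper bound on $g$-area (from the total area constraint) against a lower bound on the embedded $g$-width of this tube (via the hyperbolic collar width and $\varphi \leq C_0$) yields the conclusion $\sys(g) \geq C_1(\sigma, A)$ of Theorem~\ref{theorem: bound systole}. The topological entropy bound of Corollary~\ref{corollary: bound topological entropy} then follows by combining this systole bound with Manning's theorem ($h_{\mathrm{top}}(g) = h_{\mathrm{vol}}(g)$ for non-positively curved surfaces), Gauss--Bonnet ($\int_M |K_g|\,dA_g = -2\pi\chi(M)$), and standard volume-comparison estimates on the thick part produced by the systole lower bound.

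The main obstacle is the systole step. The inequality $\Delta_\sigma\varphi \geq -1$ permits $\varphi$ to develop arbitrarily deep pits, so the $g$-length of any individual curve lying in the $\sigma$-collar can in principle collapse to zero while the constraints remain satisfied. The delicate point is to exploit non-positive curvature of $g$ itself (not merely of $\sigma$) in tandem with the global area bound and the Step 1 upper bound on $\varphi$ in order to rule out simultaneous collapse of the entire $g$-tube around $\gamma$.
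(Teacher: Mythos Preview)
Your Step~1 (the uniform upper bound $\varphi\le C_0(\sigma,A)$) is correct; it is essentially Theorem~A of \cite{BarthelmeErchenko} and is used elsewhere in this paper. The gap is entirely in the systole step, and you have diagnosed it yourself in your final paragraph: the bound $\varphi\le C_0$ points the wrong way. It gives \emph{upper} bounds on $g$-lengths and $g$-areas in terms of $\sigma$-quantities, whereas your tube argument needs a \emph{lower} bound on the $g$-width of the collar. Since $\varphi$ may be arbitrarily negative on the $\sigma$-collar, that $g$-width can be arbitrarily small. Even granting a lower bound $R\ge R_0$ on the $g$-width, the inequality you wrote, $A\ge (\text{$g$-area of tube})\ge 2R\,\sys(g)$, yields $\sys(g)\le A/(2R_0)$, which is the wrong direction. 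The convexity coming from non-positive curvature of $g$ only tells you that curves parallel to $\gamma$ are at least as long as $\gamma$; it says nothing about transverse $g$-widths, so it cannot close the gap.

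The paper's argument bypasses pointwise control of $\varphi$ entirely and works instead with a conformal invariant: $E(\sigma)=\sup_{\mathcal A}\Mod_\sigma(\mathcal A)$, the supremum of moduli of embedded annuli. The key Lemma~\ref{lemma: Rafi} (adapted from Rafi/Minsky) shows that an annulus foliated by level sets equidistant from a $g$-geodesic has modulus bounded below by a function of $\dist_g(\gamma_0,\gamma_1)/l_g(\gamma_0)$. Since modulus is conformally invariant and bounded above by $E(\sigma)$, this gives an upper bound on that ratio. One then grows the $g$-tubular neighborhood $Z_r$ of the systole $\gamma$, uses Gauss--Bonnet to control $\kappa(\partial Z_r)$ and hence $l_g(\partial Z_r)$ and $A_g(Z_r)$, and applies the modulus lemma at each change of topology to bound the radius $r_s$ at which $Z_r$ exhausts $M$ by a constant times $l_g(\gamma)$. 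This yields $A=A_g(Z_{r_s})\le R\,l_g(\gamma)^2$, hence $\sys(g)\ge\sqrt{A/R}$. For the entropy bound the paper simply invokes Sabourau's inequality $\sys(g)\,h_{\vol}(g)\le C$ together with Manning's $h_{\topol}=h_{\vol}$; your route via volume comparison on a thick part is not needed.
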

The above result implies in particular Conjecture~1.2 of \cite{BarthelmeErchenko}. 
We further would like to emphasize the fact that the bounds $C_1$ and $C_2$ that we obtain are explicit (although far from optimal).

In fact, we will prove Theorem \ref{thmintro:systole_and_entropy_bounds} for a larger class of metrics: those with no focal points and with total positive curvature bounded above by a constant smaller than $2\pi$ (see Theorem \ref{theorem: No focal points bound systole} and Corollary \ref{corollary: bound topological entropy no focal}).

Theorem \ref{thmintro:systole_and_entropy_bounds}, together with the flexibility result proven in \cite{ErchenkoKatok}, shows that the topological entropy of the geodesic flow on $M$ for a non-positively curved metric with fixed total area somehow detects some information about a conformal class. On the other hand, we show that the metric entropy is still completely flexible in any conformal class, proving Conjecture~1.1 of \cite{BarthelmeErchenko}.

\begin{thmintro}[Theorem~\ref{flex_metric}]
Let $M$ be a closed surface of negative Euler characteristic and $\sigma$ be a hyperbolic metric on $M$. Suppose $A>0$. Then,
$$\inf_{g\in [\sigma]^<_A}h_{\metr}(g)=0.$$
\end{thmintro}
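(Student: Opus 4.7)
The plan is to construct an explicit family $\{g_\eps\}_{\eps>0}\subset [\sigma]^<_A$ with $h_\metr(g_\eps)\to 0$, by concentrating all of the negative curvature of $g_\eps$ into arbitrarily small disks, so that $g_\eps$ is essentially flat on the bulk of $M$ and degenerates to a flat cone metric in $[\sigma]$ as $\eps\to 0$. The entropy will be controlled through an upper bound of the form
\begin{equation*}
  h_\metr(g) \;\leq\; \frac{1}{A}\int_M \sqrt{-K_g}\, dA_g,
\end{equation*}
which (on a closed strictly negatively curved surface) one hopes to extract from Pesin's formula $h_{\mu_L}(g)=\int_{SM}\lambda_+\,d\mu_L$, the Riccati equation $u'+u^2+K\circ\pi=0$ for the unstable Jacobi quotient (whose positive attractor is $u=\sqrt{-K\circ\pi}$), and ergodicity of the Liouville measure.

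\textbf{Construction via flat cone metrics.} Pick any non-zero holomorphic quadratic differential $q$ on the Riemann surface $(M,[\sigma])$ and rescale so that the associated flat cone metric $g_0 = |q| = e^{2\phi_0}\sigma$ has total area $A$. Its cone angles $2\pi(1+\alpha_i)$ at the zeros $p_i$ of $q$ satisfy $\alpha_i = \tfrac{1}{2}\operatorname{ord}_{p_i}(q) > 0$, and $\phi_0$ equals $\alpha_i\log|z| + h_i(z)$ in $\sigma$-isothermal coordinates near $p_i$, with $h_i$ smooth. Define $\phi_\eps$ by replacing $\alpha_i\log|z|$ with the regularization $\tfrac{\alpha_i}{2}\log(\eps^2+|z|^2)$ on the $\sigma$-disk $|z|<\eps$ (plus a matching constant and a narrow smoothing annulus), and keeping $\phi_\eps=\phi_0$ outside. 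Then $g_\eps := e^{2\phi_\eps}\sigma$ is smooth and non-positively curved, with $K_{g_\eps}\equiv 0$ outside an $\eps$-neighborhood of the $p_i$'s. A small perturbation $\phi_\eps\mapsto\phi_\eps+\delta\psi$, with $\psi$ solving a Poisson equation so that $\Delta_\sigma \psi>0$ off the smoothing disks and $\int_M\Delta_\sigma\psi\,dA_\sigma=0$, restores strict negativity of curvature; a final rescaling fixes the area at $A$, placing the perturbed metric in $[\sigma]^<_A$.

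\textbf{Curvature integral and conclusion.} A direct computation on each smoothing annulus near $p_i$ gives
\begin{equation*}
  \int_{\eps/2 \le |z| \le \eps} \sqrt{-K_{g_\eps}}\, dA_{g_\eps} \;=\; O(\eps^{\alpha_i + 1}),
\end{equation*}
and the $\delta$-perturbation contributes an additional $O(\sqrt{\delta})$. Sending $\delta = \delta(\eps)\to 0$ slowly, $\int_M\sqrt{-K_{g_\eps}}\,dA_{g_\eps}\to 0$ as $\eps\to 0$, and the entropy inequality then gives $h_\metr(g_\eps)\to 0$, proving the theorem.

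\textbf{Main obstacle.} The crux lies in the entropy inequality itself. A direct Cauchy--Schwarz applied to Pesin's formula, using the identity $\int_{SM} u^2\,d\mu_L = \int_{SM}(-K)\circ\pi\,d\mu_L$ obtained by integrating the Riccati equation, only yields the weaker constant bound $h_\metr(g) \leq \sqrt{-2\pi\chi(M)/A}$, independent of $\eps$ and thus useless here. Extracting the sharper $L^1$-bound on $\sqrt{-K_g}$ requires exploiting the \emph{pointwise} attraction of the Riccati solution $u$ to the instantaneous equilibrium $\sqrt{-K\circ\pi}$ along $\mu_L$-typical orbits, together with quantitative control on the relaxation rate in regions where $K$ varies rapidly. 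This is the main dynamical input; with it in hand, the flat-cone construction above delivers the conclusion.
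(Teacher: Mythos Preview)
Your construction---smooth a flat cone metric in $[\sigma]$ so that all curvature lives on shrinking disks, then perturb to strict negativity---is essentially the paper's (the paper produces a single cone point via Troyanov and smooths it with the hyperbolic model of Lemma~\ref{smoothing_lemma}, whereas you take a quadratic differential and the $\log(\eps^2+|z|^2)$ regularisation; these are minor variants). The substantive gap is the entropy bound, and it is worse than you indicate: the inequality
\[
h_{\metr}(g)\;\leq\;\frac{1}{A}\int_M\sqrt{-K_g}\,dA_g
\]
is \emph{false} in general, so the ``main dynamical input'' you are hoping for does not exist. Already the one-dimensional model shows this. For $u'+u^2=f(t)$ with $f$ piecewise constant of period $T$, taking values $K_1,K_2>0$ on the two half-periods, the Lyapunov exponent tends to $\tfrac12(\sqrt{K_1}+\sqrt{K_2})$ as $T\to\infty$ but to $\sqrt{(K_1+K_2)/2}$ as $T\to 0$; by strict concavity of the square root the latter is strictly larger whenever $K_1\neq K_2$. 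On a surface the same homogenisation occurs whenever $-K$ oscillates on a scale short compared to the relaxation time $(\sqrt{-K})^{-1}$, and integrating recovers only Katok's Cauchy--Schwarz bound, never the $L^1$ bound on $\sqrt{-K}$. Were your inequality true it would strictly sharpen Katok's inequality for every non-constant-curvature metric, which should already be a warning sign.

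What does work---and what the paper imports from \cite[Section~3.3]{ErchenkoKatok}---is an estimate specific to metrics that are flat off disks of small area with controlled curvature there. For such metrics one exploits that unstable Jacobi fields grow only linearly on the flat part, while on the curved disks the Riccati solution $u$ is globally bounded by $\sqrt{\max|K|}$; a direct accounting of the decay $u'=-u^2$ between successive crossings of a curved disk then shows the time-average of $u$ along typical orbits tends to zero. So your route is salvageable, but only via an orbit-by-orbit argument tailored to the constructed family, not via a general curvature inequality.
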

The key ingredient in our proof of the above theorem is a way to smooth a conical singularity of a metric while preserving its conformal class. This technique is obtained in Lemma~\ref{smoothing_lemma}.

While trying to understand if there are additional restrictions for entropies in a fixed conformal class, we actually obtain a better picture of the coarse geometry of non-positively curved metrics.

Recall that a hyperbolic surface $(M,\sigma)$ can be decomposed into thick parts that have a bounded geometry and thin parts that are homeomorphic to annuli (see \cite[Chapter D]{BenedettiPetronio}). We show that the thick-thin decomposition of a hyperbolic surface determines a thick-thin decomposition for non-positively curved metrics that are conformally equivalent to the hyperbolic surface.  

\begin{thmintro}(Theorem~\ref{thick_thin})
For every thick piece $Y$ of $(M,\sigma)$, for every $g\in[\sigma]^\leq_A$, and for every non-trivial non-peripheral piecewise-smooth simple closed curve $\alpha$ in $Y$, the $g$-length of the $g$-geodesic representative of $\alpha$ is comparable to the $\sigma$-length of the $\sigma$-geodesic representative of $\alpha$ up to a multiplicative constant that depends only on the topology of $M$, the metric $\sigma$ and $A$.
\end{thmintro}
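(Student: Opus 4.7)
The plan is to show that on each thick piece $Y$ of $(M,\sigma)$, the conformal factor of $g$ relative to $\sigma$ is uniformly bounded above and below by positive constants depending only on $M$, $\sigma$, and $A$; the length comparison then follows by integration along the relevant curves. Writing $g = e^{2\varphi}\sigma$, the conformal transformation of Gaussian curvature on a surface gives $K_g e^{2\varphi} = K_\sigma - \Delta_\sigma \varphi = -1 - \Delta_\sigma \varphi$, so the non-positive curvature assumption is equivalent to the one-sided inequality
\begin{equation*}
\Delta_\sigma \varphi \geq -1,
\end{equation*}
while the area condition reads $\int_M e^{2\varphi}\,d\vol_\sigma = A$.

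For the upper bound on $\varphi|_Y$, I would exploit that $\varphi$ is subharmonic up to a smooth bounded perturbation. On any $\sigma$-ball $B = B_\sigma(x, r_0)$ with $x \in Y$ and $r_0$ smaller than the Margulis constant (so $B$ has bounded $\sigma$-geometry), solve $\Delta_\sigma u = -1$ on $B$ with zero boundary values; then $\varphi - u$ is genuinely subharmonic on $B$. The mean value inequality for subharmonic functions combined with Jensen's inequality applied to $e^{2\varphi}$ and the global area bound yields a pointwise upper bound $\varphi(x) \leq C(M,\sigma,A)$, uniform in $x \in Y$ and in $g \in [\sigma]_A^{\leq}$.

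The lower bound on $\varphi|_Y$ is the crux and is where Theorem~A enters. Since $Y$ has bounded $\sigma$-geometry, through every $x \in Y$ passes a non-trivial, non-peripheral simple closed curve $\beta_x$ of $\sigma$-length at most some $D = D(M,\sigma)$. The systole bound from Theorem~A then forces
\begin{equation*}
\int_{\beta_x} e^\varphi\,d\ell_\sigma \;=\; \ell_g(\beta_x) \;\geq\; \ell_g\bigl((\beta_x)_g\bigr) \;\geq\; \sys(g) \;\geq\; C_1,
\end{equation*}
so the $\sigma$-average of $e^\varphi$ along $\beta_x$ is uniformly bounded below. Combined with the upper bound established above, this rules out $\varphi$ being very small on a large portion of $\beta_x$; a Harnack-type control of the oscillation of $\varphi$ on $B_\sigma(x, r_0)$ (extracted from $\Delta_\sigma \varphi \geq -1$ together with the existing upper bound, which makes the PDE effectively two-sided in the relevant range) then transfers this to a pointwise lower bound $\varphi(x) \geq -C'(M,\sigma,A)$. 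Producing the correct oscillation estimate under only one-sided PDE control is the main obstacle I foresee, and is likely where the non-trivial work of the proof lies.

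With two-sided bounds on $\varphi|_Y$ in hand, the length comparison is immediate. Since $\alpha \subset Y$ is disjoint from the cores of the $\sigma$-thin collars, the classical collar lemma places the $\sigma$-geodesic representative $\alpha_\sigma$ inside $Y$, so
\begin{equation*}
\ell_g(\alpha_g) \;\leq\; \ell_g(\alpha_\sigma) \;=\; \int_{\alpha_\sigma} e^\varphi\,d\ell_\sigma \;\leq\; \bigl(\sup_Y e^\varphi\bigr)\,\ell_\sigma(\alpha_\sigma).
\end{equation*}
For the reverse direction, the $g$-side collar lemma obtained earlier in the paper (together with $\sys(g) \geq C_1$) confines $\alpha_g$ to a region on which the lower bound $e^\varphi \geq c$ persists, whence
\begin{equation*}
\ell_g(\alpha_g) \;=\; \int_{\alpha_g} e^\varphi\,d\ell_\sigma \;\geq\; c\,\ell_\sigma(\alpha_g) \;\geq\; c\,\ell_\sigma(\alpha_\sigma),
\end{equation*}
using that $\alpha_\sigma$ minimizes $\sigma$-length in its free homotopy class.
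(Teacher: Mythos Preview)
Your upper bound direction is essentially what the paper does: the cited \cite[Theorem A]{BarthelmeErchenko} is precisely the global upper bound on the conformal factor coming from subharmonicity plus the area constraint, and then $l_g(\alpha_g)\leq l_g(\alpha_\sigma)\leq C_2\,l_\sigma(\alpha_\sigma)$.

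The lower bound strategy, however, has a genuine gap that cannot be repaired along the lines you suggest. The pointwise lower bound $\varphi\geq -C'$ on $Y$ is simply \emph{false} for metrics in $[\sigma]^\leq_A$. The construction in Section~\ref{section:metric entropy} of this very paper (Lemma~\ref{smoothing_lemma} and the proof of Theorem~\ref{flex_metric}) produces, for any point $p\in M$ --- in particular $p$ in the thick part --- a sequence $g_k\in[\sigma]^\leq_A$ whose conformal factors satisfy $\inf_{D_{1/k}}u_k\to -\infty$. So no oscillation or Harnack estimate can be extracted here; the one-sided condition $\Delta_\sigma\varphi\geq -1$ together with the upper bound does not make the PDE ``effectively two-sided'', because $\Delta_\sigma\varphi$ can be arbitrarily large and positive on small sets without violating either hypothesis. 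A second, independent problem is that the $g$-geodesic representative $\alpha_g$ need not lie in the $\sigma$-thick part $Y$, so even a pointwise bound on $\varphi|_Y$ would not feed into your final chain of inequalities.

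The paper avoids pointwise control entirely and instead follows Rafi's combinatorial route. The key input is Lemma~\ref{Rafi_lemma}: for non-peripheral simple closed curves $\alpha,\beta$ in $Y$ one has $l_g(\alpha_g)\,l_g(\beta_g)\geq D\,\im(\alpha,\beta)$, with $D$ depending only on $\sigma$, $A$, $\chi(M)$; this comes from the systole lower bound (Theorem~\ref{theorem: bound systole}) via Rafi's notion of the $g$-size of $Y$. One then chooses a short marking $\mu$ of $Y$, uses the upper bound to control $l_g(\beta)$ for $\beta\in\mu$ by $l_\sigma(\beta)$, and obtains
\[
l_g(\alpha_g)\;\geq\;\frac{D}{C_2\,L_\sigma(\mu)}\sum_{\beta\in\mu}\im(\alpha,\beta)\;\geq\;\frac{D\,D_2}{C_2\,L_\sigma(\mu)}\,l_\sigma(\alpha_\sigma),
\]
the last step being the standard fact that intersection with a short marking coarsely dominates hyperbolic length on a thick piece. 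The point is that this argument only ever compares \emph{lengths} and \emph{intersection numbers}, never values of $\varphi$.
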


In addition, there is a well-known collar lemma for hyperbolic surfaces, i.e., if there exists a short non-trivial simple closed geodesic then the transversal closed geodesics are long. The collar lemma was generalized for Riemannian metrics with a lower curvature bound in \cite{Buser}. Lemma~\ref{collar lemma} is an analogous result for non-positively curved metrics in a fixed conformal class.

\subsection{Compactification and a result of Reshetnyak}

In the 1950s, Yuri Reshetnyak studied metrics on the disk of bounded integral curvature in the sense of Alexandrov. One of his results, \cite[Theorem 7.3.1]{Reshetnyak}, gives a compactification criterion for such metrics of bounded integral curvature (for the uniform topology), in terms of the curvature measure. In \cite{Troyanov_compact}, Troyanov extended that result (but without providing the complete proof) to the setting of metrics of bounded integral curvature on a closed surface and inside a fixed conformal class (see \cite[Theorem 6.2]{Troyanov_compact}).

It is natural to expect that one could obtain our Theorem \ref{thmintro:systole_and_entropy_bounds} starting from Troyanov's version of Reshetnyak's Theorem. Indeed, if one can prove, using Reshetnyak's Theorem, that the non-positively curved metrics considered in Theorem \ref{thmintro:systole_and_entropy_bounds} are precompact, then it would be enough to prove continuity of the systole amongst metrics of bounded integral curvature with the uniform topology. However,  Reshetnyak's Theorem does not apply directly to our case. Instead of carefully stating his theorem (we refer the reader to \cite[Theorem 6.2]{Troyanov_compact} and \cite[Theorem 7.3.1]{Reshetnyak} for the precise statements), which would require definitions that we do not need here, we will just point out the differences in the case of non-positively curved metrics. 

The main issue is that our metrics are scaled differently from those of Reshetnyak:
Suppose that $(g_n)$ is a sequence of metrics in $[\sigma]^\leq_A$. Then, Reshetnyak's theorem implies that there exists a sequence of Riemannian metrics $h_n = e^{2u_n}\sigma$ such that a subsequence converges to a metric of bounded curvature $h_{\infty} = e^{2u_{\infty}}\sigma$. However, the metrics $g_n$ and $h_n$ differ by a constant, i.e., there exists $C_n\in \mathbb{R}$ such that $g_n = C_n h_n$. Now the problem is that there is no a priori control of the constants $C_n$, and one would have to prove that they stay bounded away from $0$ and $+\infty$. (Note that, as a corollary of Theorem \ref{thmintro:systole_and_entropy_bounds}, this sequence is indeed bounded, see Theorem \ref{thmintro: precompact} below.)

A trivial example illustrates best this difference of scaling: Consider $\sigma$ a hyperbolic metric and $g_n = \sigma/n$. Then, $g_n$ obviously does not converge but the sequence $h_n$ that Reshetnyak's Theorem applies to is $h_n = n g_n = \sigma$, which does indeed trivially converge. Obviously, such an example does not preserve the total area, however, it is not obvious that one cannot construct a sequence of metrics that is $\sigma/n$ on a small disk, but still has non-positive curvature and fixed total area. The hard part in order to use Reshetnyak compacity result would be to prove directly that such sequences do not arise.

Therefore, we believe that our direct proof of Theorem \ref{thmintro:systole_and_entropy_bounds} is actually simpler than trying to use Reshetnyak's Theorem. Moreover, our result is stronger than what one could obtain via compactness, since we have an explicit dependency for the bounds $C_1$ and $C_2$ of Theorem \ref{thmintro:systole_and_entropy_bounds} (see Theorem~\ref{theorem: bound systole}).

Note that as a corollary of Theorem \ref{thmintro:systole_and_entropy_bounds} and a result of Debin, \cite[Corollary 5]{Debin}, we do get precompactness in the uniform metric sense of the class of metrics we consider

\begin{thmintro}(Theorem~\ref{precompactness})\label{thmintro: precompact}
The set of metrics $[\sigma]^\leq_A$ is precompact in the uniform metric sense (see Definition~\ref{uniform metric}) with the limiting metrics having bounded integral curvature. 
\end{thmintro}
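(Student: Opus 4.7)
The plan is to apply Debin's precompactness criterion \cite[Corollary 5]{Debin} to the family $[\sigma]^\leq_A$. Roughly, Debin's result asserts that any family of Riemannian metrics on a closed surface with uniformly bounded area, uniformly bounded integral absolute curvature, and uniformly positive systole is precompact in the uniform metric topology, and that every limit has bounded integral curvature in the sense of Alexandrov. So the proof reduces to verifying each of these hypotheses for $[\sigma]^\leq_A$.

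The area is fixed equal to $A$ by definition of the class. For the integral curvature bound, I would invoke Gauss--Bonnet: for every $g \in [\sigma]^\leq_A$ one has $\int_M K_g\, dA_g = 2\pi\chi(M)$, and since $K_g \leq 0$ this gives $\int_M |K_g|\, dA_g = 2\pi|\chi(M)|$, uniformly in $g$. The uniform positive lower bound on the systole is precisely the content of Theorem~\ref{thmintro:systole_and_entropy_bounds}. Once these three hypotheses are verified, invoking \cite[Corollary 5]{Debin} directly gives precompactness in the uniform metric topology together with the stated bound on the integral curvature of the limits.

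Should Debin's criterion also require a uniform upper bound on the diameter, this would be the main point to establish, and my strategy would combine the thick--thin decomposition (Theorem~\ref{thick_thin}) with the collar lemma (Lemma~\ref{collar lemma}). The thick pieces of $(M,\sigma)$ have bounded $\sigma$-diameter, and the comparability of $g$- and $\sigma$-lengths on thick pieces transfers this to a uniform bound on their $g$-diameter. Each thin piece is an annulus whose $g$-geometry is controlled by the collar lemma together with the systole bound and the fixed total area. The main obstacle I anticipate is precisely this latter control: one must rule out a concentration of the conformal factor inside a thin annulus that would stretch it out arbitrarily in the $g$-metric while preserving non-positive curvature and total area $A$. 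Non-positive curvature alone does not prevent long thin cusp-like regions, so it is crucial here that both the conformal class and the area are fixed, and that the systole lower bound of Theorem~\ref{thmintro:systole_and_entropy_bounds} and the collar geometry of Lemma~\ref{collar lemma} act in tandem.
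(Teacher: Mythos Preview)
Your approach is essentially the same as the paper's: the proof in the paper is a two-line invocation of the systole lower bound (Theorem~\ref{theorem: No focal points bound systole}, which specializes to Theorem~\ref{theorem: bound systole} for $[\sigma]^\leq_A$) together with Debin's criterion, and your verification of the area and total curvature hypotheses via Gauss--Bonnet is exactly what is implicit there. Your contingency paragraph on the diameter is unnecessary --- Debin's result does not require a separate diameter bound once the systole, area, and total absolute curvature are uniformly controlled --- so you can simply drop it.
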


Despite Reshetnyak's compactness criterion not being directly useful to us, we believe that the gist of his result might still apply, i.e., that as long as the family of metrics in a fixed conformal class one considers is away from cusped Alexandrov surfaces, then the systole and entropy are bounded.
To make that question more precise, we need to introduce some notations. If $g$ is a Riemannian metric with conical singularities, we denote by $K^+_g$ the positive part of its curvature. We also write $\mu^+_g$ for the positive part of the curvature measure of $g$, i.e., $\mu^+_g = K^+_g d\mathrm{vol}_g$, where $d\mathrm{vol}_g$ is the area measure (which contain atoms at the conical points of $g$).

Now, a natural question is
\begin{question} \label{question_Reshetnyak}
Let $\sigma$ be a fixed hyperbolic metric on a closed surface $M$ of negative Euler characteristic. Let $A, C, \eps>0$ be fixed. We define $[\sigma]^{\eps, C}_A$ as the set of Riemannian metrics with conical singularities $g$, conformally equivalent to $\sigma$, of total area $A$, and such that, 
%if $K^+_g$ denotes the positive part of the Gaussian curvature of $g$, then
\begin{enumerate}
 \item the total positive curvature is bounded, $\mu^+(M) \leq C; $
 \item for all $x\in M$, there exists $\eta= \eta(x)>0$ (so $\eta$ is independent of the metric $g$) such that
 \[
 \mu^+_g\left(B_{\sigma}(x,\eta)\right) \leq 2\pi-\eps,
 \]
%\[
%\int_{B_{\sigma}(x,\eta)} K^+_g d\mathrm{vol}_g  \leq 2\pi-\eps,
%\]
where $B_{\sigma}(x,\eta)$ is the ball of radius $\eta$ for the metric $\sigma$.
\end{enumerate}

Do there exist positive constants $C_1,C_2$ depending on the topology of $M$, the metric $\sigma$, $A$, $C$, $\eps$, and possibly the function $\eta$, such that
\begin{equation*}
\inf\limits_{g\in[\sigma]^{\eps,C}_A}\sys(g) \geq C_1 \text{ and } \sup\limits_{g\in[\sigma]^{\eps,C}_A}h_{\mathrm{vol}}(g) \leq C_2?
\end{equation*}
\end{question}

Our proof unfortunately does not work to prove quite that strong a result. In Section \ref{section: no focal points}, we extend our arguments to their natural limits: We need to assume that the metrics have no focal points (and bounded total positive curvature) for Lemma \ref{lemma:Rafi_No_focal_version} to hold, and we need to assume that the \emph{total} positive curvature is less than $2\pi-\eps$ (rather than the much weaker no-concentration condition as in (2) above), for our proof of Theorem \ref{theorem: No focal points bound systole} to work.

%%%%%%%%%%%%%%%%%%
%%% End of added part
%%%%%%%%%%%%%%%%%%

\subsection{Organization of the paper}
In Section~\ref{section:systole}, we prove a uniform lower bound on the length of a systole for the family of smooth non-positively curved Riemannian metrics with fixed total area in a fixed conformal class. Then, in Section \ref{section: no focal points} we extend those results to the setting of surfaces without focal points. In Section \ref{section_precompactness}, we show precompactness in the uniform metric sense of the considered metrics. We obtain a thick-thin decomposition in Section \ref{section:thick-thin}. The flexibility of metric entropy is proved in Section~\ref{section:metric entropy}. Some natural open questions are formulated in Section~\ref{section:questions}.

\subsection*{Acknowledgements}
We would like to thank Yair Minsky, Federico Rodriguez Hertz, and Dennis Sullivan for useful discussions and questions. We also thank Ian Frankel for pointing out Yuri Reshetnyak's work to us and Cl\'ement Debin for several comments that helped us better formulate Question~\ref{question_Reshetnyak}.

\section{Collar lemma}\label{section:systole}

Consider a Riemannian metric $g$ on a closed surface $M$ of Euler characteristic $\chi(M)<0$. Denote by $[g]$ the family of metrics conformally equivalent to $g$. Since all of our results apply trivially to any finite cover of $M$, we always assume $M$ to be orientable.

%\textcolor{red}{T: I added that note about non-orientability, we should put that in the intro somewhere and remove all the orientable assumptions.}

Let $\gamma$ be a simple closed curve on $M$ and $[\gamma]$ be a family of simple closed curves isotopic to $\gamma$. Denote by $l_g(\cdot)$ the $g$-length of a curve and $A_g(\cdot)$ the $g$-area of a set.

\begin{definition}\label{extremal_length}
The extremal length of $[\gamma]$ with respect to a Riemannian metric $g$ is
\begin{equation}\label{definition: extremal length}
E_g(\gamma) = \sup\limits_{g'\in [g]}\frac{\inf\limits_{\gamma'\in[\gamma]}l^2_{g'}(\gamma')}{A_{g'}(M)}.
\end{equation}
\end{definition}

Notice that $E_g(\cdot)$ depends only on the conformal class of $g$.

\begin{definition}\label{definition: modulus}
The modulus $\Mod_g(\mathcal A)$ of an annulus $\mathcal A$ on $(M, g)$ is the reciprocal of $E_{g}(\gamma)$, where $\gamma$ is a simple closed curve isotopic to a boundary curve of $\mathcal A$. 

Moreover, $\Mod_g(\mathcal A) = E_{g}(c)=\sup\limits_{g'\in [g]}\frac{\inf\limits_{c'\in[c]}l^2_{g'}(c')}{A_g'(\mathcal A)}$, where $c$ is any path connecting the boundaries of $\mathcal A$ and $[c]$ is the family of curves that connect the boundaries of $\mathcal A$ and isotopic to $c$. In particular, $\Mod_g(\cdot)$ depends only on the conformal class of $g$.
\end{definition}

Let $\gamma$ be a smooth curve that is a boundary of a set $S$ in $M$. We choose the sign of the geodesic curvature of $\gamma$ to be positive when the acceleration vector points into $S$. We denote  by $\kappa(\gamma)$ the integral of the geodesic curvature of $\gamma$.
%We say that $\gamma$ is \textit{monotonically curved} with respect to $S$ if $\gamma$ is positively (or negatively) curved with respect to $S$ at every point (the terminology from \cite{Minsky} and \cite{Rafi}). 

In the following Lemma we establish a lower bound for the modulus over certain annuli. This result is based on \cite[Lemma 3.6]{Rafi} (which itself uses \cite[Theorem 4.5]{Minsky}), but adapted to our context of smooth non-positively curved Riemannian metrics. Our result is weaker than \cite[Lemma 3.6]{Rafi} because we only give a lower bound, instead of both lower and upper bounds, for the modulus, but this is all we will need in order to prove Theorem \ref{thmintro:systole_and_entropy_bounds}
\begin{lemma}\label{lemma: Rafi}
Let $g$ be a smooth non-positively curved Riemannian metric on a closed surface $M$ with $\chi(M)<0$. Let $\mathcal A$ be an annulus in $(M,g)$. Let $\gamma_0$ and $\gamma_1$ be its two boundary curves. Assume that $\gamma_0$ and $\gamma_1$ are both equidistant to a fixed geodesic. Suppose, moreover, that $\kappa(\gamma_0)\leq 0$.
Then,
\begin{align*}
&\Mod_g(\mathcal A) = \frac{\dist_g(\gamma_0, \gamma_1)}{l_{g}(\gamma_0)} \text{ if } \kappa(\gamma_0)=0 \text{ and } \mathcal A \text{ is a flat annulus}, \\
&\Mod_g(\mathcal A)\geq \frac{1}{-2\pi\chi(M)}\ln\left(1-2\pi\chi(M)\frac{\dist_g(\gamma_0, \gamma_1)}{l_{g}(\gamma_0)}\right) \text{ otherwise, }
\end{align*}
where $\dist_g(\cdot, \cdot)$ is the distance function on $(M,g)$
\end{lemma}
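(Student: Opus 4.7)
The approach I would take is to work in Fermi coordinates around the base geodesic $\gamma$, reducing the modulus estimate to a one-dimensional integral controlled by the length function of the equidistant curves to $\gamma$. Writing $g = dt^2 + \phi(t,s)^2 ds^2$ with $s$ the arclength parameter on $\gamma$ and $t$ the signed distance to $\gamma$, the Jacobi equation $\phi_{tt} = -K\phi$ together with $K \leq 0$ makes $\phi$ convex in $t$, so the equidistant length $L(t) := \int_0^{l_g(\gamma)} \phi(t,s)\, ds$ is convex with $L'(0) = 0$ (because $\gamma$ is a geodesic).

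A direct computation plus Fubini gives
\[
L'(t) - L'(0) = \int_{\{0 \leq \tau \leq t\}} |K|\, dA \leq \int_M |K|\, dA = -2\pi\chi(M)
\]
by Gauss--Bonnet (and $K\leq 0$), so $L'(t) \leq -2\pi\chi(M)$ throughout the chart. The sign hypothesis $\kappa(\gamma_0) \leq 0$ forces, after relabeling, $\gamma_0$ to be the boundary component of $\mathcal{A}$ closer to $\gamma$, with $\gamma_1$ on the same side of $\gamma$ but farther; parametrising $\mathcal{A}$ by $t \in [t_0, t_1]$ with $L_0 = L(t_0) = l_g(\gamma_0)$ and $d = t_1 - t_0 = \dist_g(\gamma_0, \gamma_1)$ yields the linear upper bound $L(t) \leq L_0 + (t - t_0)(-2\pi\chi(M))$.

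For the modulus itself, I would test against $g' = e^{2u(t)} g$ with $u$ depending only on $t$. Any transversal curve has $g'$-length at least $\int_{t_0}^{t_1} e^{u(t)}\, dt$ (from its vertical component), while $A_{g'}(\mathcal{A}) = \int_{t_0}^{t_1} e^{2u(t)} L(t)\, dt$. The extremal choice $e^{u(t)} = 1/L(t)$ gives $\Mod_g(\mathcal{A}) \geq \int_{t_0}^{t_1} dt/L(t)$, and combining with the linear upper bound on $L$,
\[
\Mod_g(\mathcal{A}) \geq \int_{t_0}^{t_1} \frac{dt}{L_0 - 2\pi\chi(M)(t-t_0)} = \frac{1}{-2\pi\chi(M)} \ln\left(1 - 2\pi\chi(M)\,\frac{d}{L_0}\right),
\]
which is the desired inequality. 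In the flat case where $K \equiv 0$ on $\mathcal{A}$ and $\kappa(\gamma_0) = 0$, one has $L'' \equiv 0$ and $L'(t_0) = 0$, so $L \equiv L_0$ and $\mathcal{A}$ is isometric to a flat cylinder of modulus exactly $d/L_0$, giving the equality.

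The main obstacle I anticipate is justifying that the Fermi-coordinate picture is valid on all of $\mathcal{A}$: generically $\mathcal{A}$ need not embed into a single Fermi chart on $M$. I would handle this by lifting to the universal cover, where non-positive curvature guarantees the normal exponential map from the lift of $\gamma$ is a diffeomorphism, and then observe that all integrals appearing in the argument descend to $M$, so the global Gauss--Bonnet bound still applies. Pinning down the precise relation between the paper's sign convention for $\kappa(\gamma_0)$ and the one-sided derivative $L'(t_0)$ is the other delicate point, but it is a routine local computation in these coordinates.
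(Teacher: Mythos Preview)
Your approach is essentially the paper's: foliate $\mathcal A$ by equidistant curves, test the modulus against a conformal factor depending only on the transverse coordinate (the paper's choice $f(r)=l_g(\gamma_0)/l_g(\hat\gamma_r)$ is your $e^{u}=1/L$ up to a harmless constant), bound the derivative of the length function via Gauss--Bonnet to get linear growth, and integrate to obtain the logarithmic lower bound on the modulus.

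The one place your write-up diverges is in how Gauss--Bonnet is invoked. You integrate from the base geodesic $\gamma$, using $L'(0)=0$, and then bound $\int_{\{0\le\tau\le t\}}|K|\,dA$ by $\int_M|K|\,dA$. But the strip between $\gamma$ and $\gamma_0$ lies outside $\mathcal A$ and is not assumed to embed in $M$; if the normal exponential map is not injective there, the immersed integral of the non-negative function $|K|$ can \emph{exceed} $\int_M|K|\,dA$, and your proposed universal-cover fix does not repair this (passing to a cover and projecting gives the inequality in the wrong direction). The paper sidesteps this entirely by applying Gauss--Bonnet to the sub-annulus $\mathcal A_r\subset\mathcal A$ between $\gamma_0$ and $\hat\gamma_r$, which \emph{is} embedded in $M$ by hypothesis, and using $\kappa(\gamma_0)\le 0$ directly: from $\int_{\mathcal A_r}K_g+\kappa(\hat\gamma_r)-\kappa(\gamma_0)=0$ one gets $\kappa(\hat\gamma_r)\le -\int_M K_g=-2\pi\chi(M)$ at once. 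So the obstacle you flagged (validity of Fermi coordinates on $\mathcal A$) is not the real issue---$\mathcal A$ itself is fine---and the clean fix is to take $\kappa(\gamma_0)\le 0$ as the starting point rather than $L'(0)=0$.
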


\begin{proof}
If $\kappa(\gamma_0)=0$ and $\mathcal A$ is a flat annulus, then the result is classical (see \cite[Chapter 4]{Ahlfors}).

Now, consider the level curves $\hat\gamma_r := \{p\in\mathcal A| \dist_g(p, \gamma_0) = r\}$. 

Since $g$ has non-positive curvature, the annulus $A$ is foliated by the level curves $\hat\gamma_r$, $0\leq r\leq \dist_g(\gamma_0, \gamma_1)$.
Furthermore, $\hat\gamma_0 = \gamma_0$ and $\hat\gamma_{\dist_g(\gamma_0, \gamma_1)} = \gamma_1$.

We define a scaling function $f$ by $f(r) := \frac{l_{g}(\hat\gamma_0)}{l_{g}(\hat\gamma_r)}$. Then, for any $r$, the curve $\hat\gamma_r$ has length, in the metric $f(r)g$, $l_{fg}(\hat\gamma_r) = l_{g}(\gamma_0)$.

Let $\mathcal A_r$ be the annulus in $\mathcal A$ that is bounded by $\gamma_0$ and $\hat\gamma_r$.

In order to bound from below the modulus of $\mathcal{A}$, we will use that it is the extremal length of paths connecting the two boundaries.

Consider $c$ a path from $\gamma_0$ to $\gamma_1$. Then, its $fg$-length satisfies
\begin{equation*}
l_{fg}(c) \geq \dist_{fg}(\gamma_0, \gamma_1) = \int\limits_0^{\dist_g(\gamma_0, \gamma_1)}f(r)dr = \int\limits_0^{\dist_g(\gamma_0, \gamma_1)}\frac{l_{g}(\gamma_0)}{l_{g}(\hat\gamma_r)}dr = l_{g}(\gamma_0)\int\limits_0^{\dist_g(\gamma_0, \gamma_1)}\frac{1}{l_{g}(\hat\gamma_r)}dr. 
\end{equation*}

Moreover,
\begin{align*}
\vol_{fg}(\mathcal A)  = \int\limits_0^{\dist_g(\gamma_0, \gamma_1)}f^2(r)l_{g}(\hat\gamma_r)dr = \int\limits_0^{\dist_g(\gamma_0, \gamma_1)}\frac{l^2_{g}(\gamma_0)}{l_{g}(\hat\gamma_r)}dr = l^2_{g}(\gamma_0)\int\limits_0^{\dist_g(\gamma_0, \gamma_1)}\frac{1}{l_{g}(\hat\gamma_r)}dr.
\end{align*}

Let $K_{g}(\cdot)$ denote the Gaussian curvature function on $(M,g)$. Applying Gauss-Bonnet Theorem to $(M,g)$ and $(\mathcal A_r,g)$ gives that $\int_{M}K_{g}(x) dA_{g}(x)=2\pi\chi(M)$ and $\int_{\mathcal A_r}K_{g} dA_{g}+\kappa(\hat\gamma_r)-\kappa(\gamma_0) = 0$. Now, our assumptions are that $\chi(M)<0$, $\kappa(\gamma_0)\leq 0$, and $K_g(x)\leq 0$ for any $x\in (M,g)$. Thus, we have $\kappa(\hat\gamma_r)\leq -2\pi\chi(M)$.

Combining the previous inequality with the fact that $\frac{d}{dr}l_{g}(\hat\gamma_r) = \kappa(\hat\gamma_r)$, we obtain $l_{g}(\hat\gamma_r)\leq -2\pi\chi(M)r+l_{g}(\gamma_0)$.  

Therefore, using Definition~\ref{definition: modulus}, we have
\begin{align*}
\Mod_g(\mathcal A)&\geq\frac{\dist^2_{fg}(\gamma_0, \gamma_1)}{\vol_{fg}(\mathcal A)} = \int\limits_0^{\dist_g(\gamma_0, \gamma_1)}\frac{1}{l_{g}(\hat\gamma_r)}dr\geq \int\limits_0^{\dist_g(\gamma_0, \gamma_1)}\frac{1}{l_{g}(\gamma_0)-2\pi\chi(M)r}dr \\
& = \frac{1}{-2\pi\chi(M)}\ln\left(1-2\pi\chi(M)\frac{\dist_g(\gamma_0, \gamma_1)}{l_{g}(\gamma_0)}\right).
\end{align*}
\end{proof}

\begin{theorem}\label{theorem: bound systole}
Let $M$ be a closed surface of Euler characteristic $\chi(M)<0$ and $\sigma$ be a hyperbolic metric on $M$. Let $A>0$. Then, there exists a positive constant $C = C(\sigma, A)$ such that
\begin{equation*}
\inf\limits_{g\in[\sigma]^{\leq}_A}\sys(g)\geq C,
\end{equation*}
where $[\sigma]^{\leq}_A$ is a family of smooth non-positively curved metrics conformally equivalent to $\sigma$ with total area $A$ and $\sys(g)$ is the length of the shortest simple nontrivial closed geodesic for the metric $g$. 

Moreover, the constant $C$ is explicitly given by $C := \sqrt{\frac{A}{R}}$, where
\begin{align*}
R &= \pi(\chi^2(M)-\chi(M))\hat R^2+\frac{2}{\pi}(1-\pi\chi(M))\hat R+\frac{1}{\pi}, \\
\hat R &= \frac{e^{2\pi E(\chi^2(M)-3\chi(M))}-1}{-\pi\chi(M)}, \text{ and}\\
E&=E(\sigma)=\sup\limits_{\mathcal A - \text{ annulus }\subset M}\Mod_{\sigma}(\mathcal A).
\end{align*}
\end{theorem}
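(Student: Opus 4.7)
The plan is to establish an inequality of the form $A = A_g(M) \leq R \cdot \sys(g)^2$, so that rearranging gives $\sys(g) \geq \sqrt{A/R}$. Let $\gamma_0$ be a simple closed $g$-geodesic realizing the systole, with length $\ell := \sys(g)$. Since $g$ is non-positively curved, the equidistant curves $\hat\gamma_r$ from $\gamma_0$ foliate an embedded collar $\mathcal{C}_r$ for $r$ up to some maximal value $r^*$. The hypotheses of Lemma~\ref{lemma: Rafi} are met ($\gamma_0$ is a closed geodesic, so $\kappa(\gamma_0)=0$, and each $\hat\gamma_r$ is equidistant to the fixed geodesic $\gamma_0$), giving
\[
\Mod_g(\mathcal{C}_r) \;\geq\; \frac{1}{-2\pi\chi(M)}\ln\!\left(1 - 2\pi\chi(M)\,\tfrac{r}{\ell}\right).
\]
On the other hand, conformal invariance of the modulus together with the definition of $E = E(\sigma)$ as the supremum of moduli over all annuli in $(M,\sigma)$ yields $\Mod_g(\mathcal{C}_r) \leq E$. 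Solving produces an upper bound of the form $r \leq \ell \cdot (\text{const}(E,\chi))$, which will be closely related to $\ell\hat R$.

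Since a single collar need not exhaust $M$, the next step would be to iterate. When the foliation ceases to be embedded at $r = r^*$, the curve $\hat\gamma_{r^*}$ self-intersects; this locates either a $g$-geodesic arc perpendicular to $\gamma_0$ at both endpoints, or a new short simple closed $g$-geodesic (of length controlled by $\ell$ and $r^*$). Build collars around these new short curves, re-apply Lemma~\ref{lemma: Rafi}, and continue. A topological argument — essentially bookkeeping against the Euler characteristic, with the number of curves in a pants decomposition playing a role — shows the procedure terminates after a bounded number of stages and the resulting annuli cover $M$. The factor $\chi^2-3\chi$ in the exponent of $\hat R$ should reflect the cumulative modulus over this stacked family of annuli. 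For each annulus, the length estimate $l_g(\hat\gamma_s) \leq -2\pi\chi(M)\,s + \ell$ from the proof of Lemma~\ref{lemma: Rafi} yields the area bound
\[
A_g(\mathcal{C}_r) \;=\; \int_0^{r} l_g(\hat\gamma_s)\,ds \;\leq\; -\pi\chi(M)\,r^2 + \ell\, r.
\]
Combining the width bound $r \leq \ell\hat R$ with this area bound, then summing the contributions of all the collars in the covering, one obtains $A_g(M) \leq R\cdot \ell^2$ with $R$ of the precise form in the statement; the three summands of $R$ correspond respectively to the quadratic $\hat R^2$ term from interior collar area, the linear $\hat R$ term from boundary contributions, and the additive $1/\pi$ constant from residual pieces.

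The main obstacle will be the iteration step: precisely describing what happens when an equidistant collar stops being embedded in non-positive curvature, identifying the next short $g$-geodesic this exposes, bounding its length in terms of $\ell, r^*,$ and $E$, and tracking how many iterations are required before the collars cover $M$. Getting the exponent $2\pi E(\chi^2-3\chi)$ and the explicit coefficients $\pi(\chi^2-\chi)$ and $\frac{2}{\pi}(1-\pi\chi)$ exactly right requires careful combinatorial bookkeeping of the stacked moduli and areas throughout this iterative construction. Once these constants are matched, the rest of the argument is the elementary algebraic manipulation converting the area inequality into the claimed explicit lower bound $C = \sqrt{A/R}$ on the systole.
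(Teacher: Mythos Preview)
Your overall strategy---bound the width of equidistant collars via Lemma~\ref{lemma: Rafi} and the conformal invariant $E$, then convert this into an area bound $A\leq R\,\ell^2$---matches the paper. However, your proposed iteration mechanism is different from, and more complicated than, what the paper actually does.

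You suggest that when the collar around $\gamma_0$ stops being embedded, one should locate a \emph{new} short $g$-geodesic, build a collar around it, and repeat. The paper never introduces new geodesics. Instead, it keeps growing the $r$-neighborhood $N_r$ of the single original systole $\gamma$, and defines $Z_r$ to be $N_r$ together with all complementary components that are disks. The crucial observation is that for each $r$ the boundary $\partial Z_r$ still consists of curves equidistant to the \emph{same} geodesic $\gamma$; hence, between consecutive topology-change times $r_i<r_{i+1}$, the set $Z_{r_{i+1}}\setminus Z_{r_i}$ is a union of annuli whose boundaries satisfy the hypotheses of Lemma~\ref{lemma: Rafi} directly, with inner boundary a component of $\partial Z_{r_i}$ of length at most $Kr_i+2\ell$. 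This gives a clean recursion $r_{i+1}\leq P r_i + Q\ell$ with explicit $P,Q$, terminating after at most $s\leq 2-\chi(M)$ steps since each topology change strictly decreases the Euler characteristic of the complement.

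Two ingredients are entirely absent from your sketch and are essential in the paper's argument: (i) the definition of $Z_r$ as $N_r$ with disk-complements filled in, which is what forces $\chi(Z_r)\leq 0$ and hence $\kappa(\partial Z_r)\leq K=-2\pi\chi(M)$ via Gauss--Bonnet; and (ii) the isoperimetric inequality $A_g(D_u)\leq c_u^2/(4\pi)$, which controls the area of each swallowed disk and accounts for the additive $\frac{1}{\pi}$ and part of the $\hat R$-linear term in $R$. Your ``residual pieces'' interpretation of the $1/\pi$ constant is not what is happening. Your approach of stacking collars around several distinct geodesics would face overlap and length-control problems that the paper's single-neighborhood construction sidesteps entirely.
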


Theorem 1.2 in \cite{Sabourau} states that there exists a constant $C>0$ such that for every Riemannian metric $g$ on $M$ we have $$\sys(g)h_{\vol}(g)\leq C,$$
where $h_{\vol}(g)$ is the volume entropy on $(M,g)$. Moreover, the topological entropy coincides with the volume entropy for non-positively curved metrics \cite[Theorem 2]{Manning_volume}. Therefore, we obtain the following corollary of Theorem~\ref{theorem: bound systole}, which, in particular, proves Conjecture 1.2 of \cite{BarthelmeErchenko}.

\begin{corollary}\label{corollary: bound topological entropy}
Let $M$ be a closed surface with $\chi(M)<0$, and $\sigma$ be a hyperbolic metric on $M$. Let $A>0$. Then, there exists a positive constant $B = B(\sigma, A)$ such that
\begin{equation*}
\sup\limits_{g\in[\sigma]^{\leq}_A}h_{\topol}(g)\leq B,
\end{equation*}
where $h_{\topol}(g)$ is the topological entropy of the geodesic flow on $M$ with respect to the metric $g$.
\end{corollary}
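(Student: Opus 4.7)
The plan is that this corollary is an almost immediate combination of three ingredients: Sabourau's universal inequality \cite{Sabourau}, Manning's identification of topological and volume entropy for non-positively curved metrics \cite{Manning_volume}, and the systole lower bound just proved in Theorem~\ref{theorem: bound systole}. So the main work was already done; here I just have to chain the estimates.

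First, I would fix $g\in[\sigma]^{\leq}_A$. By Theorem~\ref{theorem: bound systole}, there is an explicit constant $C=C(\sigma,A)>0$, depending only on the topology of $M$, the hyperbolic metric $\sigma$, and the area $A$, with $\sys(g)\geq C$. Next, I would invoke Sabourau's Theorem~1.2 from \cite{Sabourau}, which produces a constant $C_{\mathrm{Sab}}=C_{\mathrm{Sab}}(M)$, depending only on the topology of $M$, such that $\sys(g)\,h_{\vol}(g)\leq C_{\mathrm{Sab}}$ for every Riemannian metric on $M$. Dividing, one gets
\[
h_{\vol}(g)\leq \frac{C_{\mathrm{Sab}}}{\sys(g)}\leq \frac{C_{\mathrm{Sab}}}{C}.
\]

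Finally, since $g$ is smooth and non-positively curved, Manning's theorem \cite[Theorem~2]{Manning_volume} gives $h_{\topol}(g)=h_{\vol}(g)$, and therefore $h_{\topol}(g)\leq C_{\mathrm{Sab}}/C=:B(\sigma,A)$. Taking the supremum over $g\in[\sigma]^{\leq}_A$ completes the proof. There is no real obstacle; the only thing to double-check is that the hypotheses of Sabourau's and Manning's theorems are met uniformly in our family (smoothness, non-positive curvature, and the area normalization, which Sabourau's inequality handles via its scale-invariant form), and that the systole in Theorem~\ref{theorem: bound systole} refers to the shortest nontrivial simple closed geodesic, which is exactly the quantity appearing in Sabourau's bound.
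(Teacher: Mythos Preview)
Your proof is correct and follows exactly the paper's own argument: combine Sabourau's inequality $\sys(g)h_{\vol}(g)\leq C$, Manning's identification $h_{\topol}(g)=h_{\vol}(g)$ for non-positively curved metrics, and the systole lower bound from Theorem~\ref{theorem: bound systole}. There is nothing to add.
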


In order to prove Theorem \ref{theorem: bound systole}, we will follow the proof of \cite[Lemma 4.1]{Rafi} while adapting it to our setting.
\begin{proof}[Proof of Theorem~\ref{theorem: bound systole}]
Let $g$ be a smooth non-positively curved Riemannian metric on $M$ with total area $A$ and conformally equivalent to $\sigma$. Denote by $\gamma$ the shortest simple closed nontrivial geodesic for $g$. Let $N_r$ be the open $r$-neighborhood of $\gamma$ in $(M,g)$. Then, we define $Z_r$ to be the union of $N_r$ and all components of $M\setminus N_r$ that are disks. Notice that since $\gamma$ is a smooth curve, $\partial Z_r$ is a piecewise smooth curve with at most isolated singularities that appear where the topology of $N_r$ changes (see, for example, \cite{cutlocus}). Let $\kappa(\partial Z_r)$ be the integral of the geodesic curvatures of the boundary components of $Z_r$ (including weights for the isolated singularities). Recall that the sign of $\kappa(\partial Z_r)$ is chosen with respect to an inward pointing normal vector of $Z_r$. Hence, by convexity of the distance function, we have that $\kappa(\partial Z_r)\geq 0$.
We denote by $K_g(\cdot)$ the Gaussian curvature function of $(M,g)$.

By Gauss-Bonnet Theorem applied to $(M,g)$ and $(Z_r,g)$, we have
\begin{align}
\int_{Z_r}K_g(x)dA_g(x)+\kappa(\partial Z_r) &= 2\pi\chi(Z_r) \quad \text{ and }\label{curvature}\\
\int_MK_g(x)dA_g(x)& = 2\pi\chi(M), \quad \text{respectively.} \nonumber
\end{align}
%where $\chi(\cdot)$ is the Euler characteristic function. 
By Equation \eqref{curvature}, we have, for any $r$ 
\begin{equation}\label{bound_geodesic_curvature}
\kappa(\partial Z_r)\leq -2\pi\chi(M)=K
\end{equation}
as $\chi(Z_r)\leq 0$ and $K_g(x)\leq 0$ for any $x\in (M,g)$.

Using the first variation formulas for arc length and area (see, for example, \cite{Chavel}), the functions $l_g(\partial Z_r)$ and $A_g(Z_r)$ are differentiable functions of $r$ everywhere except for finitely many $r$, where we add a disk. For those $r$ where the functions are differentiable we have $\frac{d}{dr}l_g(\partial Z_r)=\kappa(\partial Z_r)$ and $\frac{d}{dr}A_g(Z_r)=l_g(\partial Z_r)$. Define $I_r$ to be the set of all indexes $u$ such that $Z_{r_u}=N_{r_u}\cup D_u$ where $D_u$ is the union of disjoint disks and $r_u\leq r$. Let $c_u = l_g(\partial D_u)$, i.e., it is the $g$-length of the boundary of $D_u$. As a result, we obtain that
\begin{align}\label{length_area_nbh}
&l_g(\partial Z_r)-l_g(\partial Z_0) = \int\limits_0^r\kappa(\partial Z_\tau) d\tau - \sum\limits_{u\in I_r}c_u,\\
&A_g(Z_r) - A_g(Z_0) = \int\limits_0^rl_g(\partial Z_{\tau}) d\tau + \sum\limits_{u\in I_r}A_g(D_u)\nonumber.
\end{align}
Furthermore, by the isoperimetric inequality (see \cite{Izmestiev}), we have
\begin{equation}\label{isoperimetric}
A_g(D_u)\leq \frac{c_u^2}{4\pi}.
\end{equation}
Therefore, combining Equations \eqref{bound_geodesic_curvature}, \eqref{length_area_nbh}, \eqref{isoperimetric} and the facts that $l_g(\partial Z_0)=2l_g(\gamma)$ and $A_g(Z_0)=0$, the following inequalities hold:
\begin{align}
&l_g(\partial Z_r)\leq Kr+2l_g(\gamma), \label{length}\\
&\sum\limits_{u\in I_r}c_u\leq Kr+2l_g(\gamma),
\end{align} 
and 
\begin{align}\label{upper bound on area}
A_g(Z_r)&\leq \int\limits_0^r(K\tau+2l_g(\gamma))d\tau+\frac{1}{4\pi}\left(\sum\limits_{u\in I_r}c_u^2\right)\nonumber \\&\leq \frac{Kr^2}{2}+2l_g(\gamma)r+\frac{1}{4\pi}\left(\sum\limits_{u\in I_r}c_u\right)^2\\
&\leq \frac{Kr^2}{2}+2l_g(\gamma)r+\frac{1}{4\pi}\left(Kr+2l_g(\gamma)\right)^2\nonumber.
\end{align}

Let $r_0=0$ and $\{r_i\}_{i=1}^s$ be the increasing sequence of values of $r$ where the topology of $Z_r$ changes. Notice that $s\leq 2-\chi(M)$ because $g$  has non-positive curvature and $Z_s = M$. By the definition of $\{r_i\}_{i=0}^s$, $Z_{r_{i+1}}\setminus Z_{r_{i}}$ is a union of annuli with monotonically curved equidistant boundary curves for every $i=0, \dots, s-1$. 
Moreover, for each annuli in $Z_{r_{i+1}}\setminus Z_{r_{i}}$ we have that the distance between its boundaries is $r_{i+1}-r_i$ (by construction) and the length of the shorter boundary is at most $Kr_i+2l_g(\gamma)$ (see Equation \eqref{length}).

Due to the choice of sign for the definition of the geodesic curvature, notice that each annuli in $Z_{r_{i+1}}\setminus Z_{r_{i}}$ as one boundary $\alpha_i$ such that $\kappa(\alpha_i)\leq 0$ and the other, $\alpha_{i+1}$ such that $\kappa(\alpha_{i}+1)\geq 0$. Thus we can apply Lemma~\ref{lemma: Rafi} to each annuli. The lemma yields
\begin{align*}
r_{i+1}-r_i&\leq \frac{e^{EK}-1}{K}(Kr_i+2l_g(\gamma)),
\end{align*}
where $E= \sup\limits_{\mathcal A - \text{ annulus }\subset M}\Mod_{\sigma}(\mathcal A)$.
Therefore, for any $i=0, \dots, s-1$, we have
\begin{equation*}\label{upper bound on length}
r_{i+1}\leq Pr_i+Ql_g(\gamma),
\end{equation*}
where $P = e^{EK}$ and $Q = 2\frac{e^{EK}-1}{K}$.
By induction, we get
\begin{equation}
r_s %\leq Pr_{s-1}+Ql_g(\gamma)\leq P(Pr_{s-1}+Ql_g(\gamma))+Ql_g(\gamma)\leq \dots \nonumber\\
\leq P^{s+1}r_0+Ql_g(\gamma)\sum\limits_{i=0}^s P^i\leq Q\frac{P^{s+1}-1}{P-1}l_g(\gamma) \label{upper bound on distance}
\end{equation} 
Since $Z_s = M$, Equation \eqref{upper bound on area} together with Equation \eqref{upper bound on distance}, gives
\begin{equation*}
A = A_g(Z_s)\leq \frac{Kr_s^2}{2}+2l_g(\gamma)r_s+\frac{1}{4\pi}\left(Kr_s+2l_g(\gamma)\right)^2\leq R l^2_g(\gamma),
\end{equation*}
where 
\begin{align*}
&R = \pi(\chi^2(M)-\chi(M))\hat R^2+\frac{2}{\pi}(1-\pi\chi(M))\hat R+\frac{1}{\pi} \qquad \text{ and }\\
&\hat R = 2\frac{e^{-2\pi E\chi(M)}-1}{-2\pi\chi(M)}\cdot\frac{e^{-2\pi E\chi(M)(s+1)}-1}{e^{-2\pi E\chi(M)}-1} = \frac{e^{-2\pi E\chi(M)(s+1)}-1}{-\pi\chi(M)}.
\end{align*}
Therefore, we have
\begin{equation*}
\sys(g) = l_g(\gamma)\geq\sqrt{\frac{A}{R}}.
\end{equation*}
Using the fact that $s\leq 2-\chi(M)$, we prove the theorem.
\end{proof}

Following the model of \cite[Lemma 4.1]{Rafi}, with the same adaptations as the ones made in our proof of Theorem \ref{theorem: bound systole} above, we get a collar lemma.

\begin{lemma}[Collar lemma]\label{collar lemma}
Consider a closed surface $M$ of negative Euler characteristic. For every $L>0$, there exists a constant $D_L$ such that the following holds. Let $\alpha$ and $\beta$ be any two simple closed curves in $M$ that intersect non-trivially. Let $g$ be a smooth non-positively curved Riemannian metric on $M$ that is conformally equivalent to the hyperbolic metric $\sigma$. If $l_{\sigma}(\beta_{\sigma})\leq L$, then we have
$$D_L l_g(\alpha_g)\geq l_g(\beta_g),$$
where $\beta_{\sigma}$ is the $\sigma$-geodesic representative of $\beta$ and $\alpha_g$ and $\beta_g$ are the $g$-geodesic representatives of $\alpha$ and $\beta$, respectively. 
\end{lemma}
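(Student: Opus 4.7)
The plan is to adapt the $N_r, Z_r$ construction from the proof of Theorem~\ref{theorem: bound systole} to $\gamma := \beta_g$ and then exploit the transversal intersection with $\alpha_g$. Setting $\gamma = \beta_g$ and running the exact same argument gives, for $s \leq 2-\chi(M)$, the estimates
\[
l_g(\partial Z_r) \leq Kr + 2l_g(\beta_g), \qquad r_s \leq Q \frac{P^{s+1}-1}{P-1}\, l_g(\beta_g),
\]
with $K, P, Q$ depending only on $\sigma$ and $\chi(M)$. The hypothesis $l_\sigma(\beta_\sigma) \leq L$ enters by restricting $\beta$ to a finite collection of isotopy classes on $(M,\sigma)$, each admitting an embedded annular neighborhood of positive modulus. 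Taking the infimum over this finite collection produces a constant $M_L = M_L(\sigma, L) > 0$ bounding from below the maximal modulus $\Mod^*(\beta) = 1/E_\sigma(\beta)$ of any embedded annulus with core $\beta$; by conformal invariance of modulus, the same lower bound holds in any metric of the class $[\sigma]$.

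Since $\alpha$ and $\beta$ have nonzero geometric intersection, the geodesic representatives $\alpha_g$ and $\beta_g$ intersect, so $\alpha_g$ must cross any embedded annular neighborhood of $\beta_g$ transversally from one boundary to the other, giving $l_g(\alpha_g) \geq 2r$ for such a neighborhood of $g$-width $r$. The natural next step is then to use Lemma~\ref{lemma: Rafi} to translate the modulus bound $\Mod^*(\beta) \geq M_L$ into a width bound of the form $r \geq c(L,\sigma)\, l_g(\beta_g)$ for the maximal embedded annular neighborhood of $\beta_g$ in $(M,g)$. This would immediately yield the lemma with $D_L = 1/(2c(L,\sigma))$.

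The main obstacle is precisely this last conversion: Lemma~\ref{lemma: Rafi} bounds modulus from below in terms of width, so it converts upper bounds on modulus into upper bounds on width, which is the opposite of what is needed here. A cleaner alternative that sidesteps this difficulty is to combine Theorem~\ref{theorem: bound systole} with extremal length directly. Applying Definition~\ref{extremal_length} to $g' = g$ gives $E_g(\beta) \geq l_g(\beta_g)^2/A$; by conformal invariance and the identity $\Mod^*(\beta) = 1/E_\sigma(\beta)$ one has $E_g(\beta) = E_\sigma(\beta) \leq 1/M_L$, hence $l_g(\beta_g) \leq \sqrt{A/M_L}$. Since $\alpha_g$ is a simple closed $g$-geodesic, Theorem~\ref{theorem: bound systole} gives $l_g(\alpha_g) \geq \sys(g) \geq \sqrt{A/R}$. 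Combining, $l_g(\beta_g)/l_g(\alpha_g) \leq \sqrt{R/M_L}$, and one may take $D_L := \sqrt{R/M_L}$. Notably, the intersection hypothesis is only used implicitly via the conformal invariant $M_L$; the resulting bound in fact holds for any pair $(\alpha,\beta)$ with $l_\sigma(\beta_\sigma) \leq L$, reflecting the rigidity of short curves in a fixed conformal class.
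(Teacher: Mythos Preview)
Your final argument (the third paragraph) is correct and gives a clean proof. Two small points: the total area is not fixed in the lemma's statement, so ``$A$'' should be read as $A_g(M)$; fortunately the areas cancel, since Theorem~\ref{theorem: bound systole} yields $\sys(g)\geq\sqrt{A_g(M)/R}$ with $R$ independent of the area, while the extremal-length definition gives $l_g(\beta_g)^2\leq A_g(M)\,E_\sigma(\beta)$. Also, the bound $E_\sigma(\beta)\leq C_L$ already follows from the finiteness of isotopy classes of simple closed curves with $l_\sigma\leq L$; the identity $E_\sigma(\beta)=1/\Mod^*(\beta)$ is true but not needed. The intersection hypothesis is still used, if only to guarantee that $\alpha$ is essential so that $l_g(\alpha_g)\geq\sys(g)$.

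This route is, however, genuinely different from the paper's. The paper indicates that one should rerun the $Z_r$-expansion from the proof of Theorem~\ref{theorem: bound systole} (following Rafi's Lemma~4.1) directly for this statement, growing from $\alpha_g$ and using the intersection with $\beta$ together with the modulus bounds on the intermediate annuli; the hypothesis $l_\sigma(\beta_\sigma)\leq L$ enters through those modulus estimates. You instead treat Theorem~\ref{theorem: bound systole} as a black box and pair it with the tautological extremal-length inequality. Your approach is shorter and, as you observe, shows that the conclusion holds for \emph{any} essential $\alpha$, not only those meeting $\beta$; on the other hand it is less ``collar-like'' in spirit, and the constant it produces is governed by the global invariant $E(\sigma)$ rather than by the local geometry near $\beta$. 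Your first two paragraphs, with the aborted attempt to invert Lemma~\ref{lemma: Rafi}, can be dropped entirely.
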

\section{Extension to metrics with no focal points} \label{section: no focal points}

In this section, we extend Lemma \ref{lemma: Rafi} and Theorem \ref{theorem: bound systole} to the setting of surfaces with no focal points.

The main interest of this extension is that it shows the limits of our proof, as well as the place where the assumption about no concentration of the positive curvature made in Question \ref{question_Reshetnyak} is necessary.

\begin{theorem}\label{theorem: No focal points bound systole}
Let $M$ be a closed surface of Euler characteristic $\chi(M)<0$ and $\sigma$ be a hyperbolic metric on $M$. Let $A>0$ and $\eps>0$. Then, there exists a positive constant $C = C(\sigma, A, \eps)$ such that the following holds:

For every Riemannian metric $g$ with no focal points in the conformal class of $\sigma$, of total area $A$, and such that,
\[
\int_{M} K^+_g d\mathrm{vol}_g <2\pi-\eps,
\]
where $K^+_g$ is the positive part of the Gaussian curvature of $g$, we have 
\[
\sys(g) > C.
\]
\end{theorem}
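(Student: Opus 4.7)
The plan is to follow the proof of Theorem~\ref{theorem: bound systole} step by step, replacing each appeal to non-positive curvature by the corresponding property of metrics with no focal points and total positive curvature bounded by $2\pi-\eps$. Three inputs need to be upgraded: \emph{(i)} the two-sided bound $0\leq \kappa(\partial Z_r)\leq K$ on the integral geodesic curvature of the boundary of the filled $r$-neighborhood $Z_r$ of the shortest simple closed geodesic $\gamma$; \emph{(ii)} the isoperimetric inequality used to control the areas of the disk components $D_u\subset M\setminus N_r$; and \emph{(iii)} Lemma~\ref{lemma: Rafi}, which must be replaced by its no-focal-points analog, the promised Lemma~\ref{lemma:Rafi_No_focal_version}.

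For \emph{(i)}, the lower bound $\kappa(\partial Z_r)\geq 0$ still follows from convexity of the distance function to a geodesic, a property that survives on surfaces with no focal points. For the upper bound, Gauss--Bonnet on $Z_r$ gives $\kappa(\partial Z_r) = 2\pi\chi(Z_r) - \int_{Z_r}K_g\, dA_g$, and combining $\chi(Z_r)\leq 0$ with the identity $\int_M K^-_g\, dA_g = \int_M K^+_g\, dA_g - 2\pi\chi(M)$ coming from global Gauss--Bonnet yields
\[
\kappa(\partial Z_r)\leq \int_M K^+_g \, dA_g - 2\pi\chi(M) < 2\pi\bigl(1-\chi(M)\bigr) - \eps =: K'.
\]
For \emph{(ii)}, whenever the total positive curvature on a topological disk is strictly less than $2\pi$, one has an isoperimetric inequality of the form $A_g(D) \leq C\, l_g(\partial D)^2$: since $\int_{D_u}K^+_g \leq \int_M K^+_g < 2\pi-\eps$, we obtain $A_g(D_u) \leq C_\eps\, c_u^2$ for some constant $C_\eps$ depending only on $\eps$. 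With \emph{(i)}, \emph{(ii)}, and Lemma~\ref{lemma:Rafi_No_focal_version} in hand, the rest of the argument proceeds exactly as in the proof of Theorem~\ref{theorem: bound systole}: we track the finite sequence of topology changes $0=r_0<r_1<\dots<r_s$ of $Z_r$ (with $s\leq 2-\chi(M)$), apply Lemma~\ref{lemma:Rafi_No_focal_version} on each annular piece $Z_{r_{i+1}}\setminus Z_{r_i}$ to get a recursion $r_{i+1}\leq P r_i + Q l_g(\gamma)$ for constants $P,Q$ depending on $\sigma$ and $\eps$, iterate to obtain $r_s\leq R'' l_g(\gamma)$, and combine with the area estimate $A = A_g(Z_s) \leq R' l_g(\gamma)^2$ to conclude $\sys(g)\geq \sqrt{A/R'}$.

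The main obstacle, and the reason we cannot extend the result to the setting of Question~\ref{question_Reshetnyak}, is step \emph{(ii)}. The isoperimetric inequality we invoke requires that the \emph{total} positive curvature on each disk be strictly below $2\pi$; a no-concentration condition on balls of radius $\eta(x)$ as in Question~\ref{question_Reshetnyak} is not enough, because the disk components $D_u$ arising in the construction need not be contained in any such small ball and can collect arbitrarily much of the positive curvature. The second delicate point is Lemma~\ref{lemma:Rafi_No_focal_version} itself: we need the equidistant curves $\hat\gamma_r$ to foliate the annulus smoothly and to satisfy $\frac{d}{dr}l_g(\hat\gamma_r)=\kappa(\hat\gamma_r)$ with a controllable upper bound on $\kappa(\hat\gamma_r)$, and it is precisely at this point that the no-focal-points hypothesis (giving convexity of the distance to a geodesic) becomes indispensable.
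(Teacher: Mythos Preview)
Your overall strategy matches the paper's proof closely, and steps \emph{(ii)} and \emph{(iii)} as well as the recursion and area estimate are handled correctly. There is, however, a genuine error in step \emph{(i)}: you assert that the lower bound $\kappa(\partial Z_r)\geq 0$ ``still follows from convexity of the distance function to a geodesic, a property that survives on surfaces with no focal points.'' This is false. The no-focal-points hypothesis guarantees only that every point in the universal cover has a \emph{unique} orthogonal projection onto any geodesic, so that the equidistant curves $\hat\gamma_r$ foliate; it does \emph{not} force those curves to be convex. Concretely, along a normal geodesic the relevant Jacobi field satisfies $J(0)=1$, $J'(0)=0$, and no focal points says only that $J(r)>0$; in any region of positive Gaussian curvature one has $J''=-KJ<0$, so $J'$ (and hence the geodesic curvature $J'/J$ of $\hat\gamma_r$) can become negative. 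The paper is explicit that this is exactly ``the only new difficulty'' in passing from non-positive curvature to no focal points.

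The paper repairs this not by convexity but by a second application of Gauss--Bonnet: from $\kappa(\partial Z_r)=2\pi\chi(Z_r)-\int_{Z_r}K_g$ together with $\chi(M)\leq\chi(Z_r)\leq 0$ and $\int_M K_g^+<2\pi-\eps$ one gets the two-sided bound $-K_1\leq\kappa(\partial Z_r)\leq K_1$ with $K_1=-2\pi\chi(M)+2\pi-\eps$. The (possibly positive) bound on $\kappa(\gamma_i)$, taken with respect to the annulus $Z_{r_{i+1}}\smallsetminus Z_{r_i}$, is then $\kappa(\gamma_i)\leq K_1$, and this is precisely why Lemma~\ref{lemma:Rafi_No_focal_version} is stated with the hypothesis $\kappa(\gamma_0)\leq C_2$ for an arbitrary $C_2\geq 0$ rather than $\kappa(\gamma_0)\leq 0$ as in Lemma~\ref{lemma: Rafi}. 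With this correction your argument goes through and coincides with the paper's.
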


As in Theorem \ref{theorem: bound systole}, the bound $C$ can be made completely explicit.

Before going on to the proof of Theorem \ref{theorem: No focal points bound systole}, we state and prove the extension of Lemma \ref{lemma: Rafi}.  Note that it is for this result that we need to assume that the metric has no focal points. We recall that a surface has no focal points if and only if, in its universal cover, every point admits a unique orthogonal projection onto any geodesic (see, e.g.~\cite{O'Sullivan:no_conjugate})

\begin{lemma}\label{lemma:Rafi_No_focal_version}
Let $g$ be a smooth Riemannian metric with no focal points on a closed surface $M$ with $\chi(M)<0$. Denote by $K_g^+$ the positive part of the Gaussian curvature on $M$. Suppose that, for some $C>0$, $\int_M K_g^+ dA_g \leq C$.

Let $\mathcal A$ be an annulus in $(M,g)$. Let $\gamma_0$ and $\gamma_1$ be its two boundary curves. Assume that $\gamma_0$ and $\gamma_1$ are both equidistant to a fixed geodesic, and that, for some $C_2\geq 0$, we have $\kappa(\gamma_0)\leq C_2$.
Then,
\begin{align*}
&\Mod_g(\mathcal A) = \frac{\dist_g(\gamma_0, \gamma_1)}{l_{g}(\gamma_0)} \text{ if } \kappa(\gamma_0)=0 \text{ and } \mathcal A \text{ is a flat annulus}, \\
&\Mod_g(\mathcal A)\geq \frac{1}{-2\pi\chi(M) +C +C_2}\ln\left(1 + (-2\pi\chi(M) +C +C_2)\frac{\dist_g(\gamma_0, \gamma_1)}{l_{g}(\gamma_0)}\right) \text{ otherwise,}
\end{align*}
where $\dist_g(\cdot, \cdot)$ is the distance function on $(M,g)$.
\end{lemma}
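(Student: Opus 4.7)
The plan is to run the proof of Lemma \ref{lemma: Rafi} nearly verbatim, with precisely two modifications to address the removal of the non-positive curvature hypothesis. The flat annulus case is classical and unchanged, so I focus on the general case. First I would justify that the foliation of $\mathcal A$ by the level curves $\hat\gamma_r := \{p \in \mathcal A \mid \dist_g(p, \gamma_0) = r\}$ still exists. Since both $\gamma_0$ and $\gamma_1$ are equidistant to a fixed geodesic $\gamma^*$, and the no-focal-points hypothesis implies that the normal exponential map from $\gamma^*$ is a diffeomorphism in the universal cover, a tubular neighborhood of $\gamma^*$ is smoothly foliated by the equidistant curves to $\gamma^*$. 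A short triangle-inequality argument shows that the level sets of $\dist_g(\cdot, \gamma_0)$ inside $\mathcal A$ coincide with those equidistant curves, producing the foliation $\hat\gamma_r$.

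With the foliation in place, the scaling trick via $f(r) := l_g(\gamma_0)/l_g(\hat\gamma_r)$, the lower bound $\Mod_g(\mathcal A) \geq \int_0^{\dist_g(\gamma_0,\gamma_1)} dr/l_g(\hat\gamma_r)$, and the identity $\frac{d}{dr} l_g(\hat\gamma_r) = \kappa(\hat\gamma_r)$ all go through unchanged. The only step that must be adjusted is the upper bound on $\kappa(\hat\gamma_r)$. Applying Gauss--Bonnet to the sub-annulus $\mathcal A_r$ bounded by $\gamma_0$ and $\hat\gamma_r$ gives $\kappa(\hat\gamma_r) = \kappa(\gamma_0) - \int_{\mathcal A_r} K_g \, dA_g$. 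Writing $K_g = K_g^+ - K_g^-$ and invoking global Gauss--Bonnet, we obtain
\[
\int_M K_g^- \, dA_g \;=\; \int_M K_g^+ \, dA_g - 2\pi\chi(M) \;\leq\; C - 2\pi\chi(M),
\]
so that
\[
-\int_{\mathcal A_r} K_g \, dA_g \;\leq\; \int_{\mathcal A_r} K_g^- \, dA_g \;\leq\; C - 2\pi\chi(M).
\]
Combined with $\kappa(\gamma_0) \leq C_2$, this yields $\kappa(\hat\gamma_r) \leq -2\pi\chi(M) + C + C_2$. Integrating this differential inequality produces a linear upper bound $l_g(\hat\gamma_r) \leq l_g(\gamma_0) + (-2\pi\chi(M) + C + C_2)\, r$, and plugging into the modulus integral gives exactly the logarithmic estimate claimed.

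The main subtlety is that the curvature bound relies in an essential way on the \emph{global} hypothesis $\int_M K_g^+ \, dA_g \leq C$, since we estimate $\int_{\mathcal A_r} K_g^-$ by the total amount of negative curvature. This explains why a mere no-concentration condition on $K_g^+$, as considered in Question~\ref{question_Reshetnyak}, would be insufficient: the sub-annulus $\mathcal A_r$ can be large and accumulate substantial negative curvature that the local assumption cannot control. The no-focal-points hypothesis, by contrast, is used only to produce the equidistant-curve foliation; it is a genuine geometric obstruction that cannot be replaced by a pointwise or integral curvature condition.
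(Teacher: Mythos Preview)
Your proposal is correct and follows essentially the same route as the paper's own proof: both use the no-focal-points hypothesis to obtain the equidistant foliation, then apply Gauss--Bonnet on $\mathcal A_r$ together with the global identity $\int_M K_g^- \, dA_g = \int_M K_g^+ \, dA_g - 2\pi\chi(M)$ to bound $\kappa(\hat\gamma_r)$ by $-2\pi\chi(M)+C+C_2$, and finish by integrating and plugging into the modulus estimate. Your closing remarks on why the global bound on $\int_M K_g^+$ (rather than a no-concentration condition) is essential are accurate and align with the paper's commentary elsewhere.
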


\begin{proof}
We use the same notation as in the proof of Lemma \ref{lemma: Rafi}, and will only add the modifications needed for this generalization.

As previously, we consider the level curves $\hat\gamma_r := \{p\in\mathcal A| \dist_g(p, \gamma_0) = r\}$. 

Since all the curves $\hat\gamma_r$ are equidistant to a fixed geodesic $\gamma$, they must foliate the annulus $\mathcal A$. Otherwise, we would have a point $x\in A$ with two distinct orthogonal projection onto the geodesic $\gamma$. This is impossible since $g$ has no focal points.

Then as before, we have that, for any curve $c$ between the boundaries of $\mathcal A$,
\begin{equation*}
l_{fg}(c) \geq l_{g}(\gamma_0)\int\limits_0^{\dist_g(\gamma_0, \gamma_1)}\frac{1}{l_{g}(\hat\gamma_r)}dr \quad \text{ and } \quad
\vol_{fg}(\mathcal A) =  l^2_{g}(\gamma_0)\int\limits_0^{\dist_g(\gamma_0, \gamma_1)}\frac{1}{l_{g}(\hat\gamma_r)}dr,
\end{equation*}
where $f$ is the function defined by $f(r):=\frac{l_{g}(\hat\gamma_0)}{l_{g}(\hat\gamma_r)}$.

Now, Gauss--Bonnet Theorem, applied to $M$ and $\mathcal{A}_r$, the annulus bounded by $\gamma_0$ and $\hat\gamma_r$, gives
\begin{equation*}
    \int_{M}K_{g} dA_{g}=2\pi\chi(M) \quad \text{ and } \quad
    \int_{\mathcal A_r}K_{g} dA_{g} = -\kappa(\hat\gamma_r)+\kappa(\gamma_0).
\end{equation*}
Thus, we obtain
\[
\kappa(\hat\gamma_r) \leq \kappa(\gamma_0) + \int_M -K_g dA_g + \int_M K_g^+ dA_g \leq -2\pi\chi(M) + C_2+C.
\]
And integration yields that $l_g(\hat\gamma_r) \leq (-2\pi\chi(M) +C_2+C)r +l_g(\gamma_0)$. Thus, as claimed, we obtain,
\begin{multline*}
    \Mod_g(\mathcal A) \geq\frac{\dist^2_{fg}(\gamma_0, \gamma_1)}{\vol_{fg}(\mathcal A)} \geq \int\limits_0^{\dist_g(\gamma_0,\gamma_1)} \frac{1}{l_{g}(\gamma_0)+(-2\pi\chi(M)+C_2+C)r}dr \\= \frac{1}{C_3} \ln \left( 1 + C_3 \frac{\dist_g(\gamma_0,\gamma_1)}{l_{g}(\gamma_0)}\right),
\end{multline*}
where $C_3= -2\pi\chi(M) +C_2+C$.
\end{proof}

We can now prove Theorem \ref{theorem: No focal points bound systole}. Since the proof follows exactly the same lines as Theorem \ref{theorem: bound systole}, we will use the same notations and only emphasize the changes that need to be made.

\begin{proof}[Proof of Theorem \ref{theorem: No focal points bound systole}]
As before, we let $\gamma$ be the shortest geodesic of $g$, $N_r$ its $r$-tubular neighborhood, and $Z_r$ the union of $N_r$ together with all the connected components of $M\smallsetminus N_r$ that are disks.

The only new difficulty now is that the boundary curves in $\partial Z_r$ may not be monotonically curved, so we will have to bound $\kappa(\partial Z_r)$ \emph{from below} (because of the choice of sign when defining the geodesic curvature) in order to be able to apply Lemma \ref{lemma:Rafi_No_focal_version}.

Thanks to Gauss--Bonnet Theorem, and the fact that $\chi(M)\leq \chi(Z_r) \leq 0$, we have
\begin{align*}
    \kappa(\partial Z_r) &= 2\pi \chi(Z_r) - \int_{Z_r} K_g dA_g \leq -2\pi\chi(M) + \int_M K_g^+ dA_g \leq  -2\pi\chi(M) + 2\pi-\eps,\\
    \kappa(\partial Z_r) &\geq 2\pi\chi(M) - \int_M K_g^+ dA_g \geq 2\pi\chi(M) - 2\pi + \eps.
\end{align*}

We let $K_1= -2\pi\chi(M) + 2\pi-\eps$. So $-K_1 \leq \kappa(\partial Z_r) \leq K_1$.

Then, as before, we obtain that
\begin{equation*}
l_g(\partial Z_r)-l_g(\partial Z_0) = \int\limits_0^r\kappa(\partial Z_\tau) d\tau - \sum\limits_{u\in I_r}c_u, \text{ and }
A_g(Z_r) - A_g(Z_0) = \int\limits_0^rl_g(\partial Z_{\tau}) d\tau + \sum\limits_{u\in I_r}A_g(D_u).
\end{equation*}

Now, Alexandrov's version of the isoperimetric inequality (see, e.g. \cite[section 2.2]{BuragoZalgaller}) implies that
\begin{equation*} %\label{isoperimetric_no_focal}
A_g(D_u)\leq \frac{c_u^2}{2\left(2\pi - \int_{D_u}K_g^+ dA_g\right)} \leq \frac{c_u^2}{2\eps}.
\end{equation*}

\begin{remark}
Notice that this is the essential place where we need the total positive curvature to be strictly less than $2\pi$. Otherwise, one can shrink the systole by building a sequence of metrics on the surface such that all the area goes inside a disc. Then any curve that do not enter that disc will have length going to zero.
\end{remark}

The proof now follows exactly as in Theorem \ref{theorem: bound systole}, but with the appropriate changes of bounds. Indeed, we get, for any $r$,
\begin{equation*}
l_g(\partial Z_r)\leq K_1r+2l_g(\gamma), \text{ and } \sum\limits_{u\in I_r}c_u\leq K_1r+2l_g(\gamma),
\end{equation*}
Thus,
\begin{equation*}
    A_g(Z_r) \leq \frac{K_1r^2}{2} + 2l_g(\gamma) r + \frac{1}{2\eps} \left(K_1r+2l_g(\gamma) \right).
\end{equation*}

Now, we want to apply Lemma \ref{lemma:Rafi_No_focal_version} to each annuli in $Z_{r_{i+1}}\smallsetminus Z_{r_i}$. We denote by $\gamma_i \subset \partial Z_{r_i}$ and $\gamma_{i+1}\subset \partial Z_{r_{i+1}}$ the two boundary components of the annuli, and $\kappa(\gamma_i)$ for the total geodesic curvature, \emph{with respect to the annuli} in $Z_{r_{i+1}}\smallsetminus Z_{r_i}$. Then 
\[
\kappa(\gamma_i) \leq - \kappa(\partial Z_{r_{i+1}}) \leq K_1.
\]
Thus, Lemma \ref{lemma:Rafi_No_focal_version} gives
\[
r_{i+1}-r_i \leq \frac{e^{E\left(-2\pi\chi(M) + K_1 + 2\pi - \eps\right)}-1}{-2\pi\chi(M) + K_1 + 2\pi - \eps}(K_1r_i+2l_g(\gamma)),
\]
where $E= E(\sigma)$ is the supremum of the modulus (in the conformal class of $\sigma$) of all the annuli in $M$.
The same computations as in the proof of Theorem \ref{theorem: bound systole} then yield
$\sys(g) \geq \sqrt{\frac{A}{R}}$, for an appropriate $R$, depending only on $E$, $\chi(M)$ and $\eps$.
\end{proof}

We end the section by noticing that Corollary \ref{corollary: bound topological entropy} also extends to the no focal point setting, since Saboureau's result \cite[Theorem 1.2]{Sabourau} holds for any metric, and the topological entropy coincides with the volume entropy in the case of metrics with no focal points \cite{Katok}. Thus, we obtain

\begin{corollary}\label{corollary: bound topological entropy no focal}
Let $M$ be a closed surface of negative Euler characteristic and $\sigma$ be a hyperbolic metric on $M$. Let $A,\eps>0$. Then, there exists a positive constant $B = B(\sigma, A, \eps)$ such that, if $g$ is a Riemannian metric with no focal points in the conformal class of $\sigma$, of total area $A$, and 
\[
\int_g K_g^+ dA_g \leq 2\pi-\eps,
\]
then
\begin{equation*}
h_{\topol}(g)\leq B,
\end{equation*}
where $h_{\topol}(g)$ is the topological entropy of the geodesic flow on $M$ with respect to the metric $g$.
\end{corollary}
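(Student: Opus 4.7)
The plan is to combine three ingredients already stated in the excerpt: the systole bound of Theorem~\ref{theorem: No focal points bound systole}, Sabourau's universal inequality $\sys(g) \cdot h_{\vol}(g) \leq C$, and the identification of topological and volume entropy for metrics with no focal points (Katok). The argument is essentially a transcription of the proof of Corollary~\ref{corollary: bound topological entropy} into the no-focal-point regime.

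First, I would invoke Theorem~\ref{theorem: No focal points bound systole} to produce a constant $C_1 = C_1(\sigma, A, \eps) > 0$ such that every $g$ in the class under consideration satisfies $\sys(g) \geq C_1$. Note that this step is exactly where the hypotheses of no focal points, fixed total area $A$, and total positive curvature bounded above by $2\pi - \eps$ get used; once we have the systole bound, the rest of the argument is formal.

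Next, by \cite[Theorem 1.2]{Sabourau}, there is a universal constant $C = C(M)$ depending only on the topology of $M$ such that
\[
\sys(g)\, h_{\vol}(g) \leq C
\]
for every Riemannian metric $g$ on $M$. Combining this with the systole bound of the previous step yields $h_{\vol}(g) \leq C/C_1$. Finally, for a metric with no focal points one has the equality $h_{\topol}(g) = h_{\vol}(g)$ by the result of Katok cited in the excerpt, so
\[
h_{\topol}(g) \leq \frac{C}{C_1} =: B(\sigma, A, \eps).
\]

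There is no real obstacle here: every nontrivial estimate has been done in Theorem~\ref{theorem: No focal points bound systole}, and the only thing to verify is that the two black-box results (Sabourau and Katok) apply under the stated hypotheses. Sabourau's inequality holds for arbitrary smooth Riemannian metrics, so it applies without modification; the Katok equality $h_{\topol} = h_{\vol}$ requires the no-focal-point assumption, which is precisely what is built into the hypotheses of the corollary. Thus the proof is a short citation-chase of about three lines.
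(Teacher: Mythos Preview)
Your proof is correct and matches the paper's own argument essentially line for line: the paper simply observes that Sabourau's inequality holds for any metric and that $h_{\topol}=h_{\vol}$ for metrics with no focal points (citing Katok), then combines these with Theorem~\ref{theorem: No focal points bound systole}. There is nothing to add or correct.
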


\section{Compactification of metrics in a fixed conformal class} \label{section_precompactness}

%As before, $M$ is closed orientable surface of negative Euler characteristic and $\sigma$ is a hyperbolic metric on $M$. Let $A$ be a positive number. 

\begin{definition}\label{uniform metric}
A sequence of metrics $\{g_k\}$ converges to a metric $g$ on $M$ in the uniform metric topology if there are diffeomorphisms $\phi_{k}\colon M\rightarrow M$ such that the sequence $(\phi_{k}^*g_{k})$ converges to $g$ uniformly on $M$. 
\end{definition}

\begin{theorem}\label{precompactness}
The set of metrics in a fixed conformal class, with no focal points, total area $A$, and total positive curvature less than $2\pi-\eps$
is precompact in the uniform metric sense. Moreover, if a metric $g$ belongs to the limiting set, then $g$ is a metric with bounded integral curvature in the sense of Alexandrov (see \cite[Section 1.1]{Debin}).
\end{theorem}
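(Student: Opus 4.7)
The strategy is to combine the uniform systole lower bound given by Theorem~\ref{theorem: No focal points bound systole} with Debin's compactness criterion \cite[Corollary 5]{Debin} for metrics of bounded integral curvature in the Alexandrov sense. Let $\mathcal{F}$ denote the class of metrics described in the theorem statement. First I would observe that every $g\in\mathcal{F}$ is trivially a metric of bounded integral curvature, since its curvature measure $K_g\,d\vol_g$ is absolutely continuous and its total variation can be estimated uniformly using Gauss-Bonnet: from $\int_M K_g\,d\vol_g = 2\pi\chi(M)$ and $\int_M K_g^+\,d\vol_g < 2\pi-\eps$ one immediately gets
\[
\int_M |K_g|\,d\vol_g < 2(2\pi-\eps) - 2\pi\chi(M),
\]
so the total absolute curvature is uniformly bounded across $\mathcal{F}$.

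Next I would take an arbitrary sequence $(g_n)\subset\mathcal{F}$ and verify the three hypotheses that Debin's criterion needs in order to extract a uniformly convergent subsequence (up to diffeomorphisms): uniform upper bound on total area, uniform upper bound on total absolute curvature, and uniform lower bound on systole. The first is built into $\mathcal{F}$, the second is the estimate above, and the crucial third bound is precisely the content of Theorem~\ref{theorem: No focal points bound systole}, which gives a constant $C=C(\sigma,A,\eps)>0$ with $\sys(g_n)\geq C$ for every $n$. One should also check Debin's no-concentration hypothesis at points, but this is automatic for smooth metrics with $\int_M K_g^+\,d\vol_g < 2\pi - \eps < 2\pi$, since the curvature measure has no atoms and its positive part has total mass strictly less than $2\pi$.

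Applying \cite[Corollary 5]{Debin} then yields diffeomorphisms $\phi_n\colon M\to M$ and a subsequence such that $\phi_n^* g_n$ converges uniformly to some metric $g_\infty$, and Debin's theorem further guarantees that $g_\infty$ is a metric of bounded integral curvature in the sense of Alexandrov. This is precisely the conclusion of Theorem~\ref{precompactness}.

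The main obstacle is bookkeeping rather than conceptual: one must confirm that the precise statement of \cite[Corollary 5]{Debin} accepts exactly the uniform bounds we have produced (in particular, that a uniform lower bound on the systole, together with uniformly bounded area and bounded integral curvature strictly below $2\pi$, is sufficient to prevent degeneration such as a cusp formation or pinching). This is exactly the reason we assume $\int_M K_g^+\,d\vol_g < 2\pi-\eps$ rather than the weaker no-concentration condition of Question~\ref{question_Reshetnyak}, since without a strict gap from $2\pi$ the limit could develop a cusp and fail to remain in the uniform metric closure.
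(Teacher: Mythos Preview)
Your proposal is correct and follows essentially the same approach as the paper: combine the uniform systole lower bound from Theorem~\ref{theorem: No focal points bound systole} with Debin's compactness criterion. The paper's own proof is a one-line invocation of these two results, whereas you have helpfully spelled out why the hypotheses of Debin's corollary are met (uniform area, uniform total absolute curvature via Gauss--Bonnet, and the systole bound), but the underlying argument is identical.
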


\begin{proof}
Theorem~\ref{theorem: No focal points bound systole} together with \cite[Corollary 4]{Debin} show that the set of considered metrics is precompact in the uniform metric sense and the limiting metrics have bounded integral curvature. 
\end{proof}

%Using Theorem~\ref{theorem: No focal points bound systole} and \cite[Corollary 4]{Debin}, we also obtain a compactification in the setting of surfaces with no focal points.

%Denote by $K^+_g$ the positive part of the Gaussian curvature of a Riemannian metric $g$.

%\begin{theorem}\label{precompactness no focal}
%Let $\eps>0$. The set of Riemannian metrics $g$ in the conformal class of $\sigma$, of total area $A$, and such that,
%\[
%\int_{M} K^+_g d\mathrm{vol}_g <2\pi-\eps
%\]
%is precompact in the uniform metric sense. Moreover, if a metric $\tilde g$ belongs to the limiting set, then $\tilde g$ is a metric with bounded integral curvature.
%\end{theorem}

\section{Thick-thin decomposition}\label{section:thick-thin}

A hyperbolic surface $(M,\sigma)$ can be decomposed into thick and thin parts (see \cite[Chapter D]{BenedettiPetronio}). The thin part has a simple topology because the components of it are homeomorphic to annuli. The thick part has a bounded geometry in the sense that the diameter and the injectivity radius of a component of the thick part are bounded below and above by a constant depending only on the topology of $M$. 

In this section we show that a thick component equipped with a non-positively curved metric in the conformal class of $\sigma$ and of fixed total area has a geometry comparable to the $\sigma$-geometry of that piece. The following theorem is an analogue of \cite[Theorem 1]{Rafi_thick_thin} in our setting.

\begin{theorem}\label{thick_thin}
Let $M$ be a closed surface of negative Euler characteristic and $\sigma$ be a hyperbolic metric on $M$. Let $A>0$. Denote by $Y$ a component of the thick part of $(M, \sigma)$.  Then, there exist positive constants $C_1, C_2$ depending only on $\sigma, A$ and $\chi(M)$ such that:

For any non-trivial, non-peripheral, piecewise-smooth simple closed curve $\alpha$ in $Y$ and any smooth non-positively curved metric $g$ conformally equivalent to $\sigma$ with total area $A$, we have 
\begin{equation}\label{length comparison}
C_1l_{\sigma}(\alpha_{\sigma})\leq l_g(\alpha_g)\leq C_2l_{\sigma}(\alpha_{\sigma}),
\end{equation}
where $\alpha_g$ is the $g$-geodesic representative of $\alpha$.
\end{theorem}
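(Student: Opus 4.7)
My plan is to adapt the strategy of \cite[Theorem 1]{Rafi_thick_thin} to our setting, splitting the argument into an upper bound and a lower bound. For the upper bound I would exploit conformal invariance of the modulus, combined with the hyperbolic collar lemma in $(Y,\sigma)$. For the lower bound I would use a short-marking intersection-number argument, the central inputs being the systole bound (Theorem~\ref{theorem: bound systole}) and the collar lemma (Lemma~\ref{collar lemma}).

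\emph{Upper bound.} Since $\alpha_\sigma$ lies in the thick piece $Y$ and $(Y,\sigma)$ has $\sigma$-injectivity radius bounded below by some $\rho(\sigma)>0$, the standard hyperbolic collar lemma gives an embedded $\sigma$-tubular neighborhood $\mathcal{A}$ of $\alpha_\sigma$ whose $\sigma$-modulus satisfies $\Mod_\sigma(\mathcal{A})\geq c(\sigma)/l_\sigma(\alpha_\sigma)$. By conformal invariance, $\Mod_g(\mathcal{A})=\Mod_\sigma(\mathcal{A})$. Plugging the representative $g$ into Definition~\ref{definition: modulus} (equivalently, using $E_g(\alpha)\geq l_g(\alpha_g)^2/A$ together with $\Mod_g(\mathcal{A})=1/E_g(\alpha)$), I get
\[
l_g(\alpha_g)^2 \,\leq\, \frac{A}{\Mod_g(\mathcal{A})} \,\leq\, \frac{A\,l_\sigma(\alpha_\sigma)}{c(\sigma)}.
\]
Since $l_\sigma(\alpha_\sigma)\geq\sys(\sigma)>0$, this yields $l_g(\alpha_g)\leq C_2\,l_\sigma(\alpha_\sigma)$ with an explicit $C_2$ depending on $\sigma,A,\chi(M)$.

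\emph{Lower bound.} I would first pick a short marking of $Y$: a finite family $\{\gamma_j\}_{j=1}^k$ of simple closed non-peripheral curves in $Y$ with $\sigma$-lengths bounded by some $L(\sigma)>0$, with the property that every non-trivial non-peripheral simple closed curve $\alpha$ in $Y$ satisfies
\[
l_\sigma(\alpha_\sigma) \,\leq\, C(\sigma) \max_j i(\alpha,\gamma_j),
\]
whose existence follows from the bounded geometry of $(Y,\sigma)$. The target is then the pointwise claim
\[
l_g(\alpha_g) \,\geq\, c(\sigma,A)\cdot i(\alpha,\gamma_j) \quad\text{for each } j,
\]
which combined with the previous inequality immediately gives $l_g(\alpha_g)\geq C_1\,l_\sigma(\alpha_\sigma)$. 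The ingredients for the claim would be: the uniform upper bound $l_g(\gamma_j^g)\leq C_2 L(\sigma)$ just established; the uniform lower bound $l_g(\gamma_j^g)\geq\sys(g)\geq C_{\sys}$ from Theorem~\ref{theorem: bound systole}; and the $\mathrm{CAT}(0)$ nature of the universal cover of $(M,g)$, so that distinct lifts of $\gamma_j^g$ are disjoint axes of distinct conjugate hyperbolic isometries. A CAT(0)/systole calculation then produces a uniform positive lower bound on the distance between distinct lifts of $\gamma_j^g$, hence on the $g$-width of an embedded tubular neighborhood around $\gamma_j^g$, so each of the $i(\alpha,\gamma_j)$ transverse crossings of $\alpha_g$ with $\gamma_j^g$ contributes at least that much length.

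\emph{Main obstacle.} The hard part will be establishing the uniform lower bound on the $g$-width of embedded tubular neighborhoods around the $\gamma_j^g$. Concretely, in CAT(0) geometry one must show that for a simple closed geodesic of controlled length, with ambient systole bounded below, distinct lifts to the universal cover are pairwise separated by a uniform amount. Once this geometric input is in place, the rest of the lower-bound argument reduces to a standard intersection-number calculation.
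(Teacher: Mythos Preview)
Your short-marking strategy for the lower bound is what the paper does, but there is a genuine error in the upper bound, and for the lower bound the paper bypasses your acknowledged obstacle via a different inequality.

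\textbf{Upper bound.} The estimate $\Mod_\sigma(\mathcal{A}) \geq c(\sigma)/l_\sigma(\alpha_\sigma)$ is false for long $\alpha$. Non-peripheral simple closed curves in the thick piece $Y$ can have arbitrarily large $\sigma$-length $\ell$ (they may wind many times around $Y$). Your claim would force $E_\sigma(\alpha) \leq 1/\Mod_\sigma(\mathcal{A}) \leq \ell/c(\sigma)$; but taking $g'=\sigma$ in Definition~\ref{extremal_length} already gives $E_\sigma(\alpha) \geq \ell^2/A_\sigma(M)$, a contradiction once $\ell > A_\sigma(M)/c(\sigma)$. Concretely, the collar width $w(\ell)=\mathrm{arcsinh}(1/\sinh(\ell/2))$ tends to $0$ as $\ell\to\infty$, so the collar modulus $\sim 2w(\ell)/\ell$ is far smaller than $c/\ell$. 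The extremal-length route cannot produce a linear upper bound here, because the supremum defining $E_\sigma(\alpha)$ ranges over \emph{all} conformal metrics, not just non-positively curved ones. The paper instead invokes \cite[Theorem~A]{BarthelmeErchenko}, which uses the non-positive curvature hypothesis to obtain $l_g(\alpha_\sigma)\leq C_2\,l_\sigma(\alpha_\sigma)$ directly, and then concludes via $l_g(\alpha_g)\leq l_g(\alpha_\sigma)$.

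\textbf{Lower bound.} The paper uses a short marking $\mu$ of $Y$ as you propose, but the key technical input is the \emph{product} inequality $l_g(\alpha)\,l_g(\beta)\geq D\,\im(\alpha,\beta)$ for non-peripheral $\alpha,\beta$ in $Y$ (Lemma~\ref{Rafi_lemma}, obtained from \cite[Lemma~5]{Rafi_thick_thin} once Theorem~\ref{theorem: bound systole} bounds the $g$-size of $Y$ from below). Combined with the upper bound $l_g(\beta)\leq C_2\,l_\sigma(\beta)$ for each marking curve $\beta\in\mu$, this yields your target $l_g(\alpha_g)\geq c\sum_{\beta\in\mu}\im(\alpha,\beta)$ with no tube-width estimate needed in the $g$-metric; then \cite[Lemma~4.7]{Minsky_intersection} converts the intersection sum into $l_\sigma(\alpha_\sigma)$. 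Your proposed direct CAT(0) separation of lifts, from systole and length control alone, is not obvious (there is no Margulis lemma without a lower curvature bound) and is in any case unnecessary. Note finally that the paper's lower-bound argument \emph{uses} the upper bound applied to the marking curves, so the two halves are coupled.
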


Notice that we are now back in the setting of non-positively curved metrics, as opposed to the more general ones we considered in Section \ref{section: no focal points}. This is because we will use some results from \cite{BarthelmeErchenko} that were only proved for non-positively curved metrics.

We will need the following lemma.

\begin{lemma}\label{Rafi_lemma}(Version of \cite[Lemma 5]{Rafi_thick_thin} in a fixed conformal class)
The setting is as in Theorem~\ref{thick_thin}. Let $\alpha$ and $\beta$ be two non-trivial non-peripheral piecewise-smooth simple closed curves in $Y$. Then, there exists a positive constant $D = D(\sigma, A, \chi(M))$ such that for any smooth non-positively curved metric $g$ conformally equivalent to $\sigma$ with total area $A$, we have
$$l_g(\alpha)l_g(\beta)\geq D \im(\alpha,\beta),$$
where $\im(\cdot, \cdot)$ is the intersection number.
\end{lemma}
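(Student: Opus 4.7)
The plan is to reduce to $g$-geodesic representatives and then exploit the uniform injectivity-radius bound from Theorem~\ref{theorem: bound systole} together with the uniqueness of geodesics in CAT(0) disks. Since lengths are minimized by the $g$-geodesic representatives $\alpha_g,\beta_g$, it suffices to prove the inequality for them. The curves $\alpha_g$ and $\beta_g$ are simple closed geodesics meeting transversally in exactly $k=\im(\alpha,\beta)$ points $p_1,\dots,p_k$ (no bigons in non-positive curvature). By Theorem~\ref{theorem: bound systole}, $\sys(g)\geq C_1=C_1(\sigma,A)>0$, and because $g$ is non-positively curved the injectivity radius satisfies $\inj_g(p)\geq \sys(g)/2$ everywhere; set $r_0:=C_1/2$. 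Parametrize the geodesics by arc length as $\alpha_g\colon\mathbb{R}/L_\alpha\mathbb{Z}\to M$ and $\beta_g\colon\mathbb{R}/L_\beta\mathbb{Z}\to M$ with $\alpha_g(s_i)=\beta_g(t_i)=p_i$; note that $L_\alpha,L_\beta\geq\sys(g)\geq 2r_0$.

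The key step, and the main obstacle, is the following separation claim: for $i\neq j$, one cannot have both circle-distances $d_{L_\alpha}(s_i,s_j)<r_0$ and $d_{L_\beta}(t_i,t_j)<r_0$ simultaneously. To prove it, suppose both strict inequalities held. Then the shorter sub-arcs of $\alpha_g$ and $\beta_g$ joining $p_i$ to $p_j$ would each have length strictly less than $r_0$, hence be distance-minimizing (as $r_0\leq\inj_g$) and contained in $B_g(p_i,r_0)$. That ball is isometric to a convex ball in $T_{p_i}M$ and is therefore a CAT(0) space, in which the geodesic between any two points is unique; the two sub-arcs from $p_i$ to $p_j$ must therefore coincide, and uniqueness of geodesic continuation would then force $\alpha_g=\beta_g$ as closed geodesics, contradicting $\im(\alpha,\beta)>0$.

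Given the separation claim, consider on the torus $(\mathbb{R}/L_\alpha\mathbb{Z})\times(\mathbb{R}/L_\beta\mathbb{Z})$ the open rectangles
\[
R_i^\circ:=\bigl(s_i-r_0/2,\,s_i+r_0/2\bigr)\times\bigl(t_i-r_0/2,\,t_i+r_0/2\bigr), \qquad i=1,\dots,k.
\]
Each has area $r_0^2$ (the lower bounds $L_\alpha,L_\beta\geq 2r_0$ prevent wrapping), and the separation claim guarantees that the $R_i^\circ$ are pairwise disjoint, since any overlap $R_i^\circ\cap R_j^\circ\neq\emptyset$ would force both circle-distances to be smaller than $r_0$. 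Summing areas and comparing with the total area $L_\alpha L_\beta$ of the ambient torus yields $k\,r_0^2\leq l_g(\alpha_g)\,l_g(\beta_g)$, and hence
\[
l_g(\alpha)\,l_g(\beta)\geq l_g(\alpha_g)\,l_g(\beta_g)\geq \tfrac{C_1^2}{4}\,\im(\alpha,\beta),
\]
so the lemma holds with $D:=C_1^2/4=D(\sigma,A,\chi(M))$; note that this argument does not actually use the hypothesis that $\alpha,\beta$ lie in the thick component $Y$.
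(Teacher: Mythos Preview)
Your argument is correct. Both your proof and the paper's rest on the same key input, the uniform systole bound of Theorem~\ref{theorem: bound systole}, but they deploy it differently. The paper does not argue directly: it observes that Rafi's original proof of \cite[Lemma~5]{Rafi_thick_thin} goes through verbatim once one knows that the $g$-size of the thick piece $Y$ is bounded below, and that lower bound is supplied by Theorem~\ref{theorem: bound systole}. Your proof instead gives a self-contained elementary argument: pass to geodesic representatives, use the systole bound to get a uniform injectivity-radius lower bound $r_0$, prove the separation claim via uniqueness of geodesics in CAT(0) balls, and finish with the disjoint-rectangles count on the parameter torus. This buys you an explicit constant $D=C_1^2/4$ and, as you note, removes any dependence on the thick piece $Y$; your inequality holds for any pair of essential simple closed curves on $M$, which is a bit stronger than what the paper states.
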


\begin{proof}
The proof of Lemma 5 in \cite{Rafi_thick_thin} applies verbatim, just using the fact that for any smooth non-positively curved metric conformally equivalent to $\sigma$ with total area $A$ the $g$-size of $Y$ (see the introduction of \cite[Section 3]{Rafi_thick_thin}) is bounded below by $C$ thanks to our Theorem~\ref{theorem: bound systole}.
\end{proof}

\begin{proof}[Proof of Theorem~\ref{thick_thin}]
Let $\alpha$ be a non-trivial non-peripheral piecewise-smooth simple closed curve in $Y$. 

By \cite[Theorem A]{BarthelmeErchenko} there exists a constant $C_2=C_2(\sigma, A)$ such that 
\begin{equation*}%\label{upper bound length}
l_g(\alpha_{\sigma})\leq C_2l_{\sigma}(\alpha_{\sigma}).
\end{equation*}
Since $l_g(\alpha_g)\leq l_g(\alpha_{\sigma})$, we directly obtain the right hand side inequality in Equation \eqref{length comparison}.

Now we will prove the left hand side inequality in Equation \eqref{length comparison}.

Let $\mu$ be a short marking of $Y$. That is, $\mu$ is a collection of the following curves: First, $\mu$ contains all the non-trivial simple closed $\sigma$-geodesics in the $\sigma$-shortest pants decomposition of $Y$ (i.e., the sum of the $\sigma$-lengths of the cuffs of the pants is as small as possible). Then, for each such curves, we add to $\mu$ the transverse, non-trivial, non-peripheral simple closed (note that it could have endpoints on the boundary of $Y$) curve with the shortest $\sigma$-length.

Let $L_{\sigma}(\mu) = \sum\limits_{\beta\in\mu}l_{\sigma}(\mu)$ be the $\sigma$-length of $\mu$. Note that $L_{\sigma}(\mu)$ depends only on $\sigma$ and the topology of $M$. 

Then, by Lemma~\ref{Rafi_lemma} and \cite[Theorem A]{BarthelmeErchenko}, we obtain that
\begin{align*}
D\sum\limits_{\beta\in\mu}\im(\alpha_g,\beta)\leq \sum\limits_{\beta\in\mu}l_g(\alpha_g)l_g(\beta)\leq l_g(\alpha_g)\sum\limits_{\beta\in\mu}C_2l_{\sigma}(\beta) = C_2L_{\sigma}(\mu)l_g(\alpha_g).
\end{align*}
Finally, we have
\begin{equation*}
l_g(\alpha_g)\geq \frac{D}{C_2L_{\sigma}(\mu)}\sum\limits_{\beta\in\mu}\im(\alpha_g,\beta) = \frac{D}{C_2L_{\sigma}(\mu)}\sum\limits_{\beta\in\mu}\im(\alpha_{\sigma},\beta)\geq \frac{D}{C_2L_{\sigma}(\mu)}D_2 l_{\sigma}(\alpha_{\sigma}), 
\end{equation*}
where in the last inequality we used that there exists a positive constant $D_2$ that depends on $\sigma$ and the topology of $M$ such that $\sum\limits_{\beta\in\mu}\im(\alpha_{\sigma},\beta)\geq D_2l_{\sigma}(\alpha_{\sigma})$ (see the proof of \cite[Lemma 4.7]{Minsky_intersection}). As a result, we get the left inequality in \eqref{length comparison} with $C_1 = \frac{DD_2}{C_2L_{\sigma}(\mu)}$. 
\end{proof}

\section{Flexibility of the metric entropy}\label{section:metric entropy}

In this section we prove Conjecture 1.1 of \cite{BarthelmeErchenko}.

\begin{theorem}\label{flex_metric}
Let $M$ be a closed surface of negative Euler characteristic and $\sigma$ be a hyperbolic metric on $M$. Let $A>0$. Then,
$$\inf_{g\in [\sigma]^<_A}h_{\metr}(g)=0,$$
where $h_{\metr}(g)$ is the metric entropy with respect to the Liouville measure of the geodesic flow on $(M,g)$ and $ [\sigma]^<_A$ is a family of smooth negatively curved metrics conformally equivalent to $\sigma$ with total area $A$.
\end{theorem}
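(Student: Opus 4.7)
The plan is to produce an explicit sequence $\{g_n\} \subset [\sigma]^<_A$ with $h_{\metr}(g_n) \to 0$, starting from a flat metric with a single conical singularity in $[\sigma]$ of total area $A$ and then smoothing that singularity via Lemma~\ref{smoothing_lemma}. The guiding principle is the classical Osserman--Sarnak inequality, which follows from Sturm comparison on the Jacobi equation combined with Pesin's entropy formula: for a smooth negatively curved metric $g$ of area $A$ on $M$,
\[
h_{\metr}(g) \leq \frac{1}{A}\int_M \sqrt{-K_g}\,dA_g.
\]

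First, I would invoke Troyanov's existence theorem for conical metrics in a prescribed conformal class to obtain a flat metric $g_0 = e^{2u_0}\sigma \in [\sigma]$ of total area $A$ with a single conical singularity at a chosen point $p \in M$ of cone angle $\theta = 2\pi(1-\chi(M)) > 2\pi$; by Gauss--Bonnet this is forced when the curvature vanishes on $M\setminus\{p\}$. Then, for each $n$, I would apply Lemma~\ref{smoothing_lemma} at a scale $\eps_n \to 0$ to produce a smooth metric $g_n = e^{2u_n}\sigma \in [\sigma]^<_A$ that agrees with $g_0$ outside a tiny neighbourhood of $p$ and is strictly negatively curved inside it. Writing $U_n := \{x \in M : K_{g_n}(x) < 0\}$, which is contained in that neighbourhood, I would use the smoothing lemma to additionally arrange that $A_{g_n}(U_n) \to 0$ as $n\to\infty$.

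Granted this, the conclusion is immediate: since $K_{g_n} \equiv 0$ on $M\setminus U_n$, Cauchy--Schwarz combined with Gauss--Bonnet (which yields $\int_{U_n}(-K_{g_n})\,dA_{g_n} = -2\pi\chi(M)$) gives
\[
\int_M \sqrt{-K_{g_n}}\,dA_{g_n} = \int_{U_n}\sqrt{-K_{g_n}}\,dA_{g_n} \leq \sqrt{A_{g_n}(U_n)}\cdot\sqrt{-2\pi\chi(M)} \xrightarrow[n\to\infty]{} 0,
\]
so $h_{\metr}(g_n) \to 0$ by the Osserman--Sarnak bound, which proves the theorem.

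The main obstacle will be the smoothing step itself. Lemma~\ref{smoothing_lemma} must simultaneously (a) keep the conformal class $[\sigma]$ fixed, (b) preserve the total area $A$, (c) produce a smooth strictly negatively curved metric, and (d) confine the negatively curved region to a set $U_n$ of $g_n$-area tending to zero. Conditions (a)--(c) leave only the conformal factor $u_n$ as a degree of freedom, and the identity $K_{g_n} = e^{-2u_n}(-1 - \Delta_\sigma u_n)$ tightly couples the pointwise sign of $K_{g_n}$ to the $\sigma$-Laplacian of $u_n$; shrinking the support of the negative curvature while rescaling to maintain area $A$ and keeping $\Delta_\sigma u_n > -1$ everywhere is the delicate balance the smoothing lemma must strike. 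Once this is in place, the entropy estimate above closes the proof.
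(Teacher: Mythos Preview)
Your overall strategy matches the paper's: start from the Troyanov flat cone metric, smooth via Lemma~\ref{smoothing_lemma}, and bound $h_{\metr}$ via the Katok inequality $h_{\metr}(g)\le A^{-1}\int_M\sqrt{-K_g}\,dA_g$ (the paper invokes the same estimate through the reference to \cite[Section~3.3]{ErchenkoKatok}). The Cauchy--Schwarz/Gauss--Bonnet step is correct, and the $g_n$-area of the modified disk does tend to zero.

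There is, however, a genuine gap. If $g_n$ agrees with the \emph{flat} metric $g_0$ outside a neighbourhood of $p$, then $K_{g_n}\equiv 0$ there, so $g_n\notin[\sigma]^<_A$; your argument only yields $\inf_{g\in[\sigma]^\leq_A}h_{\metr}(g)=0$. Note also that Lemma~\ref{smoothing_lemma} asserts only $K_{g_k}\le 0$, not strict negativity, so your requirement~(c) is not delivered by the lemma as stated even inside the disk. The paper closes this gap with an additional step: for small $\eps>0$ it invokes Troyanov's general existence theorem to produce a cone metric $g_\eps\in[\sigma]$ of area $A$ with constant curvature $-\eps$ off a single cone point of angle larger than $2\pi$, and then runs the same smoothing. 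The resulting metrics are strictly negatively curved everywhere (curvature $-\eps$ outside the disk, and the proof of the lemma shows strict negativity persists in the transition region once $K_{g_0}<0$), hence lie in $[\sigma]^<_A$, and the entropy estimate goes through unchanged.

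A minor point: Lemma~\ref{smoothing_lemma} does not preserve total area, contrary to your~(b). Since $u_k\ge u_0$ with equality outside $D_{1/k}$, the area can only increase, and by $o(1)$ because the $g_k$-area of $D_{1/k}$ is $O(k^{-2\beta-2})$. A global rescaling by a constant tending to $1$ restores area $A$ without affecting the conformal class, the curvature sign, or the entropy argument.
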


To prove Theorem~\ref{flex_metric}, we will need the following lemma. 

\begin{lemma}\label{smoothing_lemma}(Version of \cite[Lemma 1]{Ramos} for non-positively curved metrics)
Denote by $D$ the unit disk in $\mathbb C$. Let $g_0 = e^{2(a_0(z)+\beta\ln |z|)}|dz|^2$ be a cone metric on the punctured disk $D\setminus\{0\}$, where $\beta >0$ and $a_0(\cdot)$ is a smooth function on $D$, chosen so that the curvature of $g_0$, $K_{g_0}(\cdot)$, is non-positive.

Then there exists a decreasing sequence of smooth metrics $g_k = e^{2u_k}|dz|^2$ on $D$ such that:
\begin{enumerate}[label=(\roman*)]
\item $g_k = g_0$ on $D\setminus D_{\frac{1}{k}}$, where $D_{\frac{1}{k}}$ is a disk of radius $\frac{1}{k}$ (for the Euclidean metric on $\mathbb C$) centered at $0$;\label{lemma (1)}
\item $u_k\geq u_0$ on $D\setminus \{0\}$;\label{lemma (2)}
\item $\inf\limits_{D_{\frac{1}{k}}}u_k\rightarrow -\infty$ as $k\rightarrow +\infty$;\label{lemma (3)}
\item The Gaussian curvature function $K_{g_k}(\cdot)$ on $(D,g_k)$ satisfies $K_{g_k}(z)\leq 0$ for any $z\in D$. \label{lemma (4)}
\end{enumerate}

\begin{remark}
Lemma~\ref{smoothing_lemma} can be of independent interest. In particular, it can be used to define the Ricci flow in the spirit of \cite[Theorem 3.1]{Ramos} on surfaces of non-positive curvature everywhere except for finitely many points with conical singularities of angles larger than $2\pi$. This Ricci flow will smoothen conical points while preserving non-positive curvature. 
\end{remark}

\begin{proof}[Proof of Lemma~\ref{smoothing_lemma}]
The proof follows the ideas of \cite[Lemma 1]{Ramos}.

Let $(r,\theta)$ be polar coordinates on $D$. Consider the conformal factor $$u_0(r,\theta)=a_0(r,\theta)+\beta\ln r$$ of the metric $g_0$. Notice that $u_0(r,\theta)$ tends to $-\infty$ as $r\rightarrow 0$.

For each natural number $k>\sqrt{\frac{\beta+2}{\beta}}$ we define $v_k(r)=C_k-\ln(1-r^2)$, where $C_k = \ln (1-\frac{1}{k^2})-1+\beta\ln\frac{1}{k}+\min\limits_{D}a_0(r,\theta)$. In particular, $v_k(0)=C_k\rightarrow -\infty$ as $k\rightarrow +\infty$ and $u_0(r,\theta)-v_k(r)\geq 1$ for any $\theta$ and $r\in[\frac{1}{k},\sqrt{\frac{\beta}{\beta+2}}]$. Moreover, the metric $e^{2v_k}|dz|^2$ on $D_{\frac{1}{k}}$ has constant negative curvature $-4e^{-2C_k}\rightarrow -\infty$ as $k\rightarrow +\infty$. 

Choose a smooth function $\psi\colon \mathbb R\rightarrow \mathbb R$ such that
\begin{enumerate}
\item $\psi(s) = -s$ for $s\leq -1$;
\item $\psi(s) = 0$ for $s\geq 1$;
\item $-1\leq \psi'(s)\leq 0$ and $\psi''(s)\geq 0$ for any $s$.
\end{enumerate}

Define a smooth function
\begin{equation}
u_k = \begin{cases}
\psi(u_0-v_k)+u_0 &\text{if } 0 \leq r\leq \frac{1}{2k}+\frac{1}{2}\sqrt{\frac{\beta}{\beta+2}},\\
u_0 &\text{otherwise.}
 \end{cases}
%u_k = \left\{\begin{aligned}&\psi(u_0-v_k)+u_0 &\text{ if } \quad &r\in \left[0,\frac{1}{2k}+\frac{\sqrt{1+\beta^2}-1}{2\beta}\right],\\
%&u_0 &\quad &\text{ otherwise. } 
%\end{aligned}
%\right.
\end{equation}
In particular \ref{lemma (2)} holds because $\psi(s)\geq 0$ for every $s\in R$.

The function $u_k$ is smooth because $u_0$ is smooth outside of any neighborhood of $r=0$, $u_k = u_0$ for $r\in[\frac{1}{k},\frac{1}{2k}+\frac{1}{2}\sqrt{\frac{\beta}{\beta+2}}]$, and $u_k=v_k$ in some neighborhood of $r=0$. In particular, \ref{lemma (1)} and \ref{lemma (3)} in Lemma~\ref{smoothing_lemma} holds.

Moreover, we have
\begin{enumerate}[label=(\alph*)]
\item Let $D'$ be the subset of $D$ such that $u_0(z)\leq v_k(z)-1$ for $z\in D'$. Then, $u_k=v_k$ and $K_{g_k}=-4e^{-2C_k}<0$ on $D'$.
\item Let $D''$ be the subset of $D$ such that $u_0(z)\geq v_k(z)+1$ for $z\in D''$. Then, $u_k=u_0$ and $K_{g_k}\leq 0$ on $D''$.
\item Let $D'''$ be the subset of $D$ such that $v_k(z)-1<u_0(z)< v_k(z)+1$ for $z\in D'''$. We need to check that $K_{g_k}(z)\leq 0$ for $z\in D''$.

Recall that $K_{g_k} = -e^{-2u_k}\Delta u_k$. Therefore, $K_{g_k}\leq 0$ if and only if $\Delta u_k\geq 0$. In particular, $\Delta u_0\geq 0$ on $D'''$ as $K_{g_0}\leq 0$ on $D\setminus \{0\}$

Using the conditions on $\psi$, we have the following on $D'''$:
\begin{align*}
\Delta u_k &= \psi''(u_0-v_k)|\nabla (u_0-v_k)|^2+\psi'(u_0-v_k)\Delta (u_0-v_k)+\Delta u_0 \\&\geq -\psi'(u_0-v_k)\Delta v_k = -\psi'(u_0-v_k)\frac{4}{(1-r^2)^2}\geq 0.
\end{align*}
\end{enumerate}

Therefore, \ref{lemma (4)} in Lemma~\ref{smoothing_lemma} holds.
\end{proof}
\end{lemma}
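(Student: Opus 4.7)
The plan is to smooth the conical singularity of $g_0$ at the origin by replacing the conformal factor $u_0$ in a shrinking neighborhood of $0$ with a radially symmetric ``hyperbolic-type'' conformal factor $v_k$ whose metric has constant negative curvature, and then gluing the two conformal factors through a regularized max obtained from a convex smoothing bump $\psi$. This is the adaptation to the non-positively curved category of the standard trick (due, in this form, to Ramos) for smoothing conical points inside a given conformal class, and the crux of the argument is to check that the gluing preserves the sign of the Laplacian of the conformal factor.

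First I would set $v_k(r) := C_k - \ln(1 - r^2)$; a direct computation shows that the round disk metric $e^{2 v_k}|dz|^2$ has constant Gaussian curvature $-4 e^{-2 C_k}$. The constant $C_k$ will be chosen so that $C_k \to -\infty$, which will be the source of property \ref{lemma (3)}, and simultaneously so that in a definite annular region around $r = 1/k$ one has $u_0 - v_k \geq 1$. Since $u_0(r,\theta) = a_0(r,\theta) + \beta \ln r$ is bounded below on $D \setminus D_{1/k}$ by $\beta \ln(1/k) + \min_D a_0$, while $v_k(r)$ is essentially $C_k$ away from $\partial D$, the choice $C_k \approx \beta \ln(1/k) + \min_D a_0 - 1$ suffices.

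Next I would pick a smoothing $\psi:\mathbb{R}\to\mathbb{R}$ with $\psi(s) = -s$ for $s\leq -1$, $\psi(s) = 0$ for $s \geq 1$, $-1 \leq \psi' \leq 0$, and $\psi'' \geq 0$, and define
\[
u_k(z) \;:=\; \begin{cases} \psi\bigl(u_0(z) - v_k(r)\bigr) + u_0(z) & \text{on a small disk around } 0, \\ u_0(z) & \text{elsewhere,}\end{cases}
\]
where the small disk is taken just large enough to contain the region where $u_0 - v_k < 1$. With the choice of $C_k$ above, $u_k \equiv v_k$ near $r = 0$ and $u_k \equiv u_0$ on $D \setminus D_{1/k}$, which gives \ref{lemma (1)} and \ref{lemma (3)}. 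Property \ref{lemma (2)} is immediate from $\psi \geq 0$, and smoothness of $u_k$ follows from the standard max/mollification reasoning on the three regions $\{u_0 - v_k \leq -1\}$, $\{|u_0 - v_k| < 1\}$, and $\{u_0 - v_k \geq 1\}$.

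The main obstacle is \ref{lemma (4)}: since $K_{g_k} = -e^{-2 u_k}\Delta u_k$, I must verify $\Delta u_k \geq 0$ on all of $D$. This is clear on $\{u_k = u_0\}$ (where $\Delta u_0 \geq 0$ because $K_{g_0} \leq 0$) and on $\{u_k = v_k\}$ (where $\Delta v_k > 0$). In the transition region a direct computation gives
\[
\Delta u_k = \psi''(u_0 - v_k)\bigl|\nabla(u_0 - v_k)\bigr|^2 + \bigl(1 + \psi'(u_0 - v_k)\bigr)\Delta u_0 - \psi'(u_0 - v_k)\Delta v_k.
\]
Each of the three summands is non-negative: $\psi'' \geq 0$; $1 + \psi' \in [0,1]$ and $\Delta u_0 \geq 0$; and $-\psi' \in [0,1]$ with $\Delta v_k = 4/(1-r^2)^2 > 0$. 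Hence $\Delta u_k \geq 0$ everywhere, as required. The monotone decrease of $\{g_k\}$ in $k$ can then be arranged by taking $C_{k+1} \leq C_k$ and shrinking the modification region with $k$, so that on the nested regions where modifications occur the conformal factor only gets pushed down further towards $v_k$. The hard part of the argument is really just the convexity computation above; everything else is bookkeeping in the choice of $C_k$ and of the transition radius.
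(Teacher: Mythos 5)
Your proposal is correct and follows essentially the same route as the paper: the same hyperbolic comparison factor $v_k(r)=C_k-\ln(1-r^2)$ with $C_k\to-\infty$, the same convex gluing function $\psi$, and the same convexity computation giving $\Delta u_k\geq 0$ in the transition region (your grouping $(1+\psi')\Delta u_0-\psi'\Delta v_k$ is just a rearrangement of the paper's estimate). The one point to state more carefully (the paper handles it by cutting off at radius $\tfrac{1}{2k}+\tfrac12\sqrt{\beta/(\beta+2)}$, inside the annulus where $u_0-v_k\geq 1$) is that the set $\{u_0-v_k<1\}$ also contains a neighborhood of $\partial D$ where $v_k$ blows up, so the gluing disk cannot literally be chosen ``just large enough to contain'' that set but must stop within the annulus $[\tfrac1k,\sqrt{\beta/(\beta+2)}]$ where $u_0-v_k\geq 1$.
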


\begin{proof}[Proof of Theorem~\ref{flex_metric}]

Pick a point $p$ on $M$. Then, a result of \cite[Section 5]{Troyanov} states that there exists a unique metric $g$, conformally equivalent to $\sigma$, of total area $A$, and of zero curvature everywhere except at the point $p$ where it has a conical singularity of angle $\alpha = 2\pi(1-\chi(M))$. In particular, $p$ admits an open neighborhood $\mathcal U$ and there exists a diffeomorphism from $\mathcal U\setminus\{p\}$ to $D\setminus \{0\}$ such that the metric $g$ in the coordinates of $D\setminus \{0\}$ have the following expression
$$g=(\beta+1)^2r^{2\beta}|dz|^2,$$
where $\beta = \frac{\alpha}{2\pi}-1>0.$

Denote by $g_k= e^{2u_k}|dz|^2$ the family of smooth metrics given by Lemma~\ref{smoothing_lemma} applied to the metric $g$. The $g$-radius of the disk $D_{\frac{1}{k}}$ of radius $r=1/k$ centered at $0$ is equal to $1/k^{\beta+1}$ and has $g$-area $\pi(\beta+1)/k^{2\beta+2}$. In particular, the $g$-radius and $g$-area of $D_{\frac{1}{k}}$ tends to $0$ as $k\rightarrow +\infty$. 

Using the notations of the proof of Lemma~\ref{smoothing_lemma}, we have 
\begin{equation*}
u_0=u_0(r) = \ln(\beta+1)+\beta\ln r \text{ and } v_k(r) = \ln\left(1-\frac{1}{k^2}\right)-1+\beta\ln\frac{1}{k}+\ln(\beta+1)-\ln(1-r^2).
\end{equation*}

In particular, $u_0(1/k)-v_k(1/k)=1$ and $u_0(r)-v_k(r)$ increase when $r\in\left(0,\sqrt{\beta/(\beta+2)} \right)$ and decrease when $r\in\left(\sqrt{\beta/(\beta+2)},1\right)$. Moreover, $u_0(r)-v_k(r)\leq -1$ and $u_k=v_k$ if $r\in[0,\frac{1}{k}e^{-2/\beta}(1-\frac{1}{k^2})^{1/\beta}]$. Therefore, there exists $C>0$ and $K>0$ such that, for any $k>K$, and any $z\in D_{\frac{1}{k}}$, the curvature satisfies $K_{g_k}(z) \geq -Ck^{2\beta}$.

Finally, applying the arguments of \cite[Section 3.3]{ErchenkoKatok}, we obtain $$h_{\metr}(g_{k})\rightarrow 0 \text{ as } k\rightarrow \infty.$$
In particular, $\inf\limits_{g\in[\sigma]^\leq_A}h_{\metr}(g)=0$.

Let $\eps_0>0$ be a sufficiently small number. Then, by \cite[Theorem A]{Troyanov_general}, for any $0\leq\eps<\eps_0$ there exists a metric $g_{\eps}$ of constant curvature $-\eps$ everywhere except a point where it has the conical singularity with angle larger than $2\pi$ which has the total area $A$ and is conformally equivalent to $\sigma$.
Following the same argument as above by starting with metric $g_\eps$, we obtain 
\begin{equation*}
\inf\limits_{g\in[\sigma]^<_A}h_{\metr}(g)=0. \qedhere
\end{equation*}
\end{proof}
\section{Further questions}\label{section:questions}

In this section, $M$ is still a closed surface of negative Euler characteristic $\chi(M)$ and $\sigma$ is a hyperbolic metric on $M$.

%Let $A>0$ be a positive number. Denote by $[\sigma]^<_A$ the set of smooth negatively curved Riemannian metrics with the total area $A$ that are conformally equivalent to $\sigma$.

\subsection{Possible values of entropies in a fixed conformal class}

By \cite[Theorem B]{Katok}, we know that for any smooth negatively curved Riemannian metric $g$ on $M$ which is not a metric of constant curvature, we have the following inequalities for the metric entropy $h_{\metr}(g)$ with respect to the Liouville measure and the topological entropy $h_{\topol}(g)$ of the geodesic flow on $(M,g)$

\begin{equation}\label{constraint_entropies}
0<h_{\metr}(g)<\left(\frac{-2\pi\chi(M)}{A}\right)^{\frac{1}{2}}<h_{\topol}(g).
\end{equation}

In \cite{ErchenkoKatok}, A.~Katok and the second author proved that any two pair of reals satisfying to the above inequality are realized as a pair $(h_{\metr}(g), h_{\topol}(g))$ of a negatively curved metric (with fixed total area $A$).

On the other hand, Theorem~\ref{flex_metric} and Corollary~\ref{corollary: bound topological entropy}, show that, when one fixes the conformal class, then the metric entropy can be arbitrary close to $0$ whereas the topological entropy is bounded above.

Thus, it is natural to try to understand the possible pairs $(h_{\metr}(g), h_{\topol}(g))$ where $g\in[\sigma]^<_A$.
%With the discover of a new constraint in the fixed conformal class and the knowledge of the flexibility of the metric entropy, it is natural to try to understand the possible pairs $(h_{\metr}(g), h_{\topol}(g))$ where $g\in[\sigma]^<_A$.
%
\begin{question}\label{q: max_top}
What is the graph of the function $$H^{\topol}_{\sigma}(x) := \sup\{h_{\topol}(g)\,|\,g\in[\sigma]^<_A,\, h_{\metr}(g)=x \}$$
where $x\in\left(0,\left(\frac{-2\pi\chi(M)}{A}\right)^{\frac{1}{2}}\right]?$
\end{question}

While it seems hard to answer Question~\ref{q: max_top}, a good first step would be to answer the following questions.

\begin{question}
For any $x\in\left(0,\left(\frac{-2\pi\chi(M)}{A}\right)^{\frac{1}{2}}\right]$, does there exists $g\in[\sigma]^<_A$ (or $g\in[\sigma]^\leq_A$) such that $$h_{\metr}(g)=x \quad\text{ and }\quad h_{\topol}(g) = H^{\topol}_{\sigma}(x)?$$
\end{question}

\begin{question}\label{max_top_on_flat}
If $\lim\limits_{x\rightarrow 0+}H_{\sigma}^{\topol}(x)$ exists, what is its value in terms of $\sigma$?
\end{question}

Note that Question~\ref{max_top_on_flat} basically asks what is the supremum of the topological entropy of the geodesic flow (``properly" defined) on singular flat metrics that are conformally equivalent to $\sigma$ and have total area $A$.

While we do not know the answers to the above questions, we expect that the set of possible pairs $(h_{\metr}(g), h_{\topol}(g))$ where $g\in[\sigma]^<_A$ looks like the shaded region on Figure~\ref{figure: entropies}. Indeed, considering \cite[Theorem 5.1]{BarthelmeErchenko} and \cite[Section 2]{ErchenkoKatok}, one sees that to increase topological entropy one needs to shrink a non-trivial simple closed curve. Now, to preserve negative curvature we need to modify the metric on some neighborhood of that curve whose size, most likely, will depend on the conformal class. Therefore, we do not expect that, in a fixed conformal class it is possible to increase topological entropy while having the metric entropy arbitrary close to $\left(\frac{-2\pi\chi(M)}{A}\right)^{\frac{1}{2}}$ (i.e., we expect a gap between the shaded domain and the vertical line in Figure~\ref{figure: entropies}).

Moreover, given the construction in \cite[section 3]{ErchenkoKatok} and corollary ~\ref{corollary: bound topological entropy}, we expect that, for any hyperbolic metric $\sigma$, the limit $\lim\limits_{x\rightarrow 0+}H_{\sigma}^{\topol}(x)$ exists. Notice that this limit will go to infinity as $\sigma$ leaves every compact of the Teichm\"uller space.

%the second author and A.~Katok showed in \cite{ErchenkoKatok} that any pairs of numbers satisfying \eqref{constraint_entropies} are realizable as topological and metric entropies of geodesic flow on a set of smooth negatively curved Riemannian metrics with total area $A$ (assumption of $g$ being conformally equivalent to a fixed hyperbolic metric is dropped). In this regard, the geometric constructions in the proof of that fact in \cite{ErchenkoKatok}, and having Corollary~\ref{corollary: bound topological entropy}, we expect that $\lim\limits_{x\rightarrow 0+}H_{\sigma}^{\topol}(x)$ exists, and it has to tend to $+\infty$ as $\sigma$ gets closer to the boundary of the Teichm\"uller space.

\begin{figure}
\centering
%\scalebox{.4}{
  \begin{tikzpicture}[scale=1.2]
\draw[dashed] (1,1) -- (1,4);
 \draw[->] (-0.5,0) -- (3,0) node at (2,-0.3) {Metric entropy}; 
    \draw (0,-0.5) -- (0,1); 
    \draw[->] (0,3) -- (0,4) 
    node[rotate = 90] at (-0.3, 3) {Topological entropy};
    \draw[yellow, fill=yellow, line width = 1.3] (0,3)--(0,1)--(1,1)--plot [samples=200, domain=0:1]  (\x,-\x*\x*\x*\x*\x*\x*\x*\x-\x*\x*\x*\x*\x*\x+3);
    \draw[dashed, line width = 1.3] (0,1)--(1,1);
    \draw[dashed, line width = 1.3] (0,1)--(0,3);
		\draw [dashed, line width = 1.3] plot [samples=200, domain=0:1]  (\x,-\x*\x*\x*\x*\x*\x*\x*\x-\x*\x*\x*\x*\x*\x+3);
    %\draw[dashed, line width = 1.3] plot [samples=200, domain=0:1] (\x,-\x**8-\x**6+3);
		%\draw[dashed, domain=0:1, variable=\x] plot ({\x},{-\x**8-\x**6+3});
    \node(x) at (1, 1) {$\bullet$};
\node[align=right](y) at (2.8, 1) { Metric of constant \\  negative curvature};
\draw[->] (y) -- (x);
\end{tikzpicture} %}
 \caption{Conjectural possible values of entropies in a fixed conformal class.}
      \label{figure: entropies}
\end{figure}
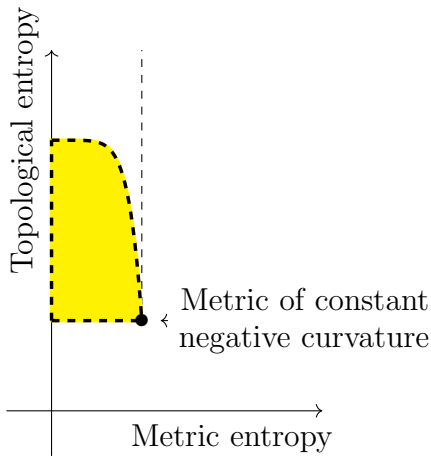

We also expect that there should be negatively curved metrics $g$ in any fixed conformal class such that $(h_{\metr}(g), h_{\topol}(g))$ is any point of the (admissible) neighborhood of $\left(0,\sqrt{-2\pi\chi(M)/A}\right)$.

Indeed, in \cite{Kerckhoff}, Kerckhoff proved that for any Riemannian metrics $g_1$ and $g_2$ the Teichm\"uller distance $d_{Teich}(g_1,g_2)$ between their conformal classes is equal to 
\begin{equation}\label{Teichmuller distance}
d_{Teich}(g_1,g_2) = \frac{1}{2}\log\left(\sup\limits_{\gamma}\frac{E_{g_1}(\gamma)}{E_{g_2}(\gamma)}\right),
\end{equation}
where $\gamma$ ranges over all non-trivial simple closed curves (see Definition~\ref{extremal_length} for $E_{g}(\gamma)$). Thus, $C^0$-closeness of Riemannian metrics implies closeness of their conformal classes in the Teichm\"uller space. Therefore, the examples built in \cite[Section 3.1]{ErchenkoKatok} such that $(h_{\metr}(g), h_{\topol}(g))$ is in the neighborhood of $\left(0,\sqrt{-2\pi\chi(M)/A}\right)$ belong to conformal classes not far from the conformal class of $\sigma$. It is thus likely that one can make similar examples in a fixed conformal class.

\subsection{What is in the compactification of $[\sigma]^\leq_A$?}

By Theorem~\ref{precompactness}, the set of metrics $[\sigma]^{\leq}_A$ is precompact in the uniform metric sense. Moreover, $g$ is a metric of bounded integral curvature. What seems not to be known is how ``singular'' the metric is.

\begin{question}
What are the properties of a metric which is the limit of a sequence of metrics in $[\sigma]^\leq_A$? 
\end{question}

\subsection{Flexibility beyond two entropies}

There are other interesting and important intrinsic characteristics of the geodesic flow on negatively curved surfaces beside $h_{\metr}(\cdot)$ and $h_{\topol}(\cdot)$. Let $h_{\harm}(g)$ be the entropy of the geodesic flow on $(M,g)$ with respect to the harmonic invariant measure. Denote by $\lambda_{\max}(g)$ the positive Lyapunov exponent with respect to the measure of maximal entropy.

The following inequalities hold for any negatively curved metrics with fixed total area $A$ (see \cite{Ruelle}, \cite{Katok}, and \cite{Ledrappier}).
\begin{equation}\label{some_ineq}
h_{\metr}(g)\leq  \left(\frac{-2\pi\chi(M)}{A}\right)^{\frac{1}{2}}  \leq h_{\harm}(g) \leq h_{\topol}(g) \leq \lambda_{\max}(g).
\end{equation}
Moreover, if any of the inequalities above is an equality, then all the other also are equalities and the metric $g$ has constant curvature.

By Corollary~\ref{corollary: bound topological entropy}, we know that there exists a uniform upper bound for $h_{\topol}(\cdot)$ on $[\sigma]^<_A$. Therefore, the next natural question is the following.

\begin{question}
Does there exist a uniform upper bound for $\lambda_{\max}(\cdot)$ on $[\sigma]^<_A$?
\end{question}

In terms of the study of the flexibility properties of geometric and dynamical data, then a very general question is
\begin{question}
What four-tuples of positive numbers satisfying inequalities \eqref{some_ineq} are realizable on $[\sigma]^<_A$?
\end{question}

\bibliography{bibliography}{}
\bibliographystyle{alpha}

\end{document}